\newcommand\version{September 8, 2019}
\newtheorem{theorem}{Theorem}%[section]
\newtheorem{proposition}[theorem]{Proposition}
\newtheorem{lemma}[theorem]{Lemma}
\theoremstyle{definition}
\theoremstyle{remark}
\newtheorem{remark}[theorem]{Remark}
\newcommand{\1}{\mathbbm{1}}
\renewcommand{\epsilon}{\varepsilon}
\renewcommand{\phi}{\varphi}
\newcommand{\R}{\mathbb{R}}
\newcommand{\Sph}{\mathbb{S}}
\newcommand{\Z}{\mathbb{Z}}
\begin{document}

\title[A note on a theorem of M. Christ --- \version]{A note on a theorem of M. Christ}

\author{Rupert L. Frank}
\address[R. L. Frank]{Mathematisches Institut, Ludwig-Maximilans Univers\"at M\"unchen, The\-resienstr. 39, 80333 M\"unchen, Germany, and Munich Center for Quantum Science and Technology (MCQST), Schellingstr. 4, 80799 M\"unchen, Germany, and Mathematics 253-37, Caltech, Pasa\-de\-na, CA 91125, USA}
\email{r.frank@lmu.de, rlfrank@caltech.edu}

\author{Elliott H. Lieb}
\address[E. H. Lieb]{Departments of Mathematics and Physics, Jadwin Hall, Princeton University, Princeton, NJ 08544, USA}
\email{lieb@princeton.edu}

\begin{abstract}
This note is a supplement to our paper \cite{FrLi}. Recently, M. Christ has derived a deep result concerning  stability of the Riesz rearrangement inequality. We are interested in the special case of this inequality where two of the sets are equal and the third set is a fixed ball. We show that in this special case, Christ's proof extends with only minor changes to the case where characteristic functions are replaced by functions taking values between zero and one.
\end{abstract}

%\subjclass[2000]{Primary 35P15, Secondary 35J10, 47F05}

\maketitle

\renewcommand{\thefootnote}{${}$} \footnotetext{\copyright\, 2019 by
  the authors. This paper may be reproduced, in its entirety, for
  non-commercial purposes.\\
  Partial support through US National Science Foundation grant DMS-1363432 and through German Research Foundation grant EXC-2111 390814868 (R.L.F.) is acknowledged.}

\section{The theorem}

A special case of the Riesz rearrangement inequality states that for sets $E\subset\R^N$ of finite measure and a ball $B\subset\R^N$, centered at the origin,
$$
\frac12 \iint_{E\times E} \1_B(x-y)\,dx\,dy \leq \frac12 \iint_{E^*\times E^*} \1_B(x-y)\,dx\,dy \,,
$$
where $E^*$ is the ball, centered at the origin, of measure $|E^*|=|E|$; see the original work \cite{Ri}, as well as \cite[Thm.~3.7]{LiLo} for a textbook proof and \cite[Sec.~2]{Bu} for a historical discussion. Under the strict admissibility condition $0<|B|^{1/N}/(2|E|^{1/N})<1$, Burchard \cite{Bu} showed that equality is attained if and only if $E$ is a ball (up to sets of measure zero). Recently, Christ \cite{Ch} obtained a remainder term in the inequality which measures the distance of $E$ from being a ball. We emphasize that the results of Riesz, Burchard and Christ are valid for a more general inequality involving three functions. Here we restrict our attention to the case where two of these sets are equal and where the third one is a ball.

Our note concerns a generalization of the above inequality to the case where the set $E$, or rather its characteristic function, is replaced by a function $\rho$ taking values between zero and one. The Riesz rearrangement inequality and the bathtub principle \cite[Thm.~1.14]{LiLo} imply that
$$
\frac12 \iint_{\R^N\times\R^N} \rho(x) \1_B(x-y) \rho(y)\,dx\,dy \leq \frac12 \iint_{E^*\times E^*} \1_B(x-y)\,dx\,dy \,,
$$
where $E^*$ is the ball, centered at the origin, of measure $|E^*|=\int_{\R^N} \rho\,dx$. Our goal is to show that Christ's method of obtaining a remainder term works with minor changes in this more general case of functions $\rho$, which is needed in our paper \cite{FrLi}.

For a function $0\not\equiv\rho\in L^1(\R^N)$ with $0\leq\rho\leq 1$, we set
$$
A[\rho] := \left( 2\|\rho\|_1 \right)^{-1} \inf_{a\in\R^N} \| \rho-\1_{E^*+a} \|_1 \,,
$$
where $E^*$ is the ball, centered at the origin, of measure $|E^*|=\int_{\R^N} \rho\,dx$.

\begin{theorem}\label{christ}
Let $0<\delta\leq 1/2$. Then there is a constant $c_{N,\delta}>0$ such that for all balls $B\subset\R^N$, centered at the origin, and all $\rho\in L^1(\R^N)$ with $0\leq\rho\leq 1$ and
$$
\delta \leq \frac{|B|^{1/N}}{2\, \|\rho\|_1^{1/N}} \leq 1-\delta \,,
$$
one has
$$
\frac12 \iint_{\R^N\times\R^N} \rho(x)\1_B(x-y)\rho(y)\,dx\,dy \leq \frac12\iint_{E^*\times E^*} \1_B(x-y)\,dx\,dy - c_{N,\delta} \|\rho\|_1^2\, A[\rho]^2 \,,
$$
where $E^*$ is the ball, centered at the origin, of measure $|E^*|=\int_{\R^N} \rho\,dx$.
\end{theorem}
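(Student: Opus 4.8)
The plan is to estimate the deficit $\mathcal{E}[\1_{E^*}]-\mathcal{E}[\rho]$, where $\mathcal{E}[\rho]:=\frac12\iint_{\R^N\times\R^N}\rho(x)\1_B(x-y)\rho(y)\,dx\,dy=\frac12\int_{\R^N}\rho\,(\1_B*\rho)\,dx$, by splitting it into nonnegative pieces using the Riesz rearrangement inequality and the bathtub principle. Let $\rho^*$ be the symmetric decreasing rearrangement of $\rho$ and put $v:=\1_B*\1_{E^*}$; since $\1_B$ is the indicator of a ball centred at the origin, $(\1_B)^*=\1_B$. Applying Riesz once and the bathtub principle twice (the latter because $\1_B*\rho^*$ and $v$ are radial and nonincreasing, $0\le\rho^*\le1$, and $\int\rho^*=|E^*|$),
$$2\mathcal{E}[\rho]\le 2\mathcal{E}[\rho^*]=\int_{\R^N}\rho^*(\1_B*\rho^*)\le\int_{E^*}(\1_B*\rho^*)=\int_{\R^N}\rho^*\,v\le\int_{E^*}v=2\mathcal{E}[\1_{E^*}].$$
Tracking the losses in this chain yields
$$\mathcal{E}[\1_{E^*}]-\mathcal{E}[\rho]=d_1+\tfrac12 d_2+\tfrac12 d_3,$$
where $d_1:=\mathcal{E}[\rho^*]-\mathcal{E}[\rho]$, $d_2:=\int_{\R^N}(\1_{E^*}-\rho^*)(\1_B*\rho^*)$ and $d_3:=\int_{\R^N}(\1_{E^*}-\rho^*)\,v$ are all nonnegative. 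Since $d_2\ge0$ it suffices to bound $d_1+d_3$ from below by $c_{N,\delta}\|\rho\|_1^2A[\rho]^2$.

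Next I would estimate the bathtub term $d_3$. Because $0\le\rho^*\le1$ and $\int\rho^*=|E^*|$, the function $\1_{E^*}-\rho^*$ is $\ge0$ on $E^*$, $\le0$ on $\R^N\setminus E^*$, and has integral zero; this sign structure is the only essential use of the hypothesis $\rho\le1$. Moreover $v(x)=|E^*\cap(B+x)|$ is the volume of intersection of two balls whose centres lie at distance $|x|$, and under the admissibility hypothesis $\delta\le|B|^{1/N}/(2\|\rho\|_1^{1/N})\le1-\delta$ the sphere $\partial E^*$ lies strictly inside the range of radii on which this profile is \emph{strictly} decreasing; by scaling, the radial profile of $v$ has derivative $\le-\kappa_{N,\delta}|\partial E^*|$ on an annular neighbourhood of $\partial E^*$, with $\kappa_{N,\delta}>0$ depending only on $N$ and $\delta$. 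Since, for prescribed mass, $\int\phi\,(v-v|_{\partial E^*})$ over $0\le\phi\le1$ is minimised by taking $\phi$ to be the indicator of an annulus abutting $\partial E^*$ — on the inside for the part of $\1_{E^*}-\rho^*$ supported in $E^*$, on the outside for the rest — a short computation gives
$$d_3\ \ge\ c_{N,\delta}\,\bigl(\tfrac12\|\rho^*-\1_{E^*}\|_1\bigr)^2$$
when $\|\rho^*-\1_{E^*}\|_1$ is small; when it is not small, the same estimate, with a possibly smaller constant, follows from strict monotonicity of the profile of $v$ on a fixed annulus and a compactness argument.

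The main obstacle is the symmetrization deficit $d_1$, and here Christ's theorem is invoked. For $\rho=\1_E$ one has $\rho^*=\1_{E^*}$, $d_1=\mathcal{E}[\1_{E^*}]-\mathcal{E}[\1_E]$, and Christ's result in the special case of two equal sets and a ball is precisely the estimate $d_1\ge c_{N,\delta}\inf_a\|\1_E(\cdot-a)-\1_{E^*}\|_1^2$ under our admissibility hypothesis. The point to be checked is that Christ's proof produces, with only cosmetic changes, the more general inequality
$$\mathcal{E}[\rho^*]-\mathcal{E}[\rho]\ \ge\ c_{N,\delta}\,\inf_{a\in\R^N}\|\rho(\cdot-a)-\rho^*\|_1^2$$
for every $\rho$ with $0\le\rho\le1$ obeying the admissibility hypothesis — the infimum over translations being unavoidable because $\mathcal{E}$ is translation invariant. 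This should go through because the parameter governing Christ's quantitative estimates is exactly $|B|^{1/N}/(2\|\rho\|_1^{1/N})$, and the steps in which he exploits that one is dealing with a set rather than a function can be carried out through superlevel sets, equivalently via the layer-cake representation $\rho=\int_0^1\1_{\{\rho>t\}}\,dt$. Verifying that no estimate in Christ's argument deteriorates under this replacement, and in particular that the admissibility parameter is inherited at each stage, is the substance of the work; the remaining steps are soft.

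Finally I would assemble the pieces. By the triangle inequality, $\|\rho(\cdot-b)-\1_{E^*}\|_1\le\|\rho(\cdot-b)-\rho^*\|_1+\|\rho^*-\1_{E^*}\|_1$ for every $b\in\R^N$; taking the infimum over $b$, squaring, and using $(s+t)^2\le2s^2+2t^2$ together with the two displayed lower bounds,
$$\bigl(2\|\rho\|_1A[\rho]\bigr)^2=\inf_{b\in\R^N}\|\rho(\cdot-b)-\1_{E^*}\|_1^2\le 2\inf_{a\in\R^N}\|\rho(\cdot-a)-\rho^*\|_1^2+2\|\rho^*-\1_{E^*}\|_1^2\le C_{N,\delta}\,(d_1+d_3),$$
where the first equality uses $\1_{E^*+a}=\1_{E^*}(\cdot-a)$. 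Since $d_1+d_3\le2(d_1+\tfrac12 d_2+\tfrac12 d_3)=2(\mathcal{E}[\1_{E^*}]-\mathcal{E}[\rho])$, this is exactly the assertion of the theorem after renaming the constant.
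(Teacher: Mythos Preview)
The gap is in your step 3. The inequality you need there,
\[
\mathcal E[\rho^*]-\mathcal E[\rho]\ \ge\ c_{N,\delta}\,\inf_{a}\|\rho(\cdot-a)-\rho^*\|_1^2,
\]
is a stability result for the Riesz inequality for \emph{functions}, with the comparison object being the general symmetric decreasing profile $\rho^*$. That is not Christ's theorem: Christ works with characteristic functions of sets and compares to balls. Extending his argument to densities $0\le\rho\le1$ is exactly the substance of the paper, not a cosmetic addendum. The layer-cake route you sketch runs into two concrete obstacles. First, the admissibility parameter $|B|^{1/N}/(2|\{\rho>t\}|^{1/N})$ is in no way controlled uniformly in $t$: as $t\uparrow\|\rho\|_\infty$ the level sets shrink to measure zero and as $t\downarrow 0$ they may be arbitrarily large, so Christ's constant degenerates on a set of layers of positive measure. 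Second, even on the good layers, Christ's bound produces a translation $a_t$ depending on $t$ (and for off-diagonal pairs $s\neq t$ you would need the full three-set result, with two translations and a matrix), and there is no mechanism to align these into a single $a$ controlling $\|\rho(\cdot-a)-\rho^*\|_1$. Finally, Christ's second-order analysis is carried out at the optimizer $\1_{E^*}$; it is not designed to yield stability toward an arbitrary $\rho^*$, which is not even an optimizer of the original variational problem.

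For comparison, the paper's proof also uses your $d_3$-type bound (this is Step~2 of its Section~4), and for $A[\rho]$ bounded away from zero it does invoke Christ's result for sets via a flow argument. But for small $A[\rho]$ it does not reduce to the set case: Propositions~\ref{christproofmain} and~\ref{christproofreduction} rerun Christ's local analysis directly for densities, replacing set containment by the bathtub principle in the linear term and observing that the quadratic (Hessian) term lives on $\Sph^{N-1}$ and is identical to the set case. That rerun is the work; your proposal defers it.
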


Our proof yields (in principle) a computable numerical value of the constant $c_{N,\delta}$. No compactness is involved. However, the value obtain is probably very far from being optimal.

As we mentioned before, in the case where $\rho$ is the characteristic function of a set $E$, this theorem is a special case of a more general theorem of Christ \cite{Ch} which concerns three different sets. Let us explain this in more detail. In the case of three arbitrary sets the remainder involves two translation parameters and a matrix with determinant one. However, using the triangle inequality and the fact that in our case two of the sets coincide and the third one is a ball, one can bound this remainder term in terms of $A[\1_E]$, which involves only a single translation parameter. In this way, Theorem \ref{christ} for characteristic functions follows from the result in \cite{Ch}.

Our theorem for $\rho$ with values between zero and one is a modest extension of this result which is obtained by essentially the same method of proof. A related extension was announced in \cite{ChIl}, again in a three function setting. However, the inequality in \cite{ChIl} is of a somewhat different nature since objects defined on a compact Abelian group are compared with their rearrangements in $\R/\Z$.

Here we provide a proof of Theorem \ref{christ} without claiming any conceptual novelty. We prepared this note for three reasons. First, the proof of Theorem \ref{christ}, as stated, does not appear in the literature. Second, we want to prove our claim that the constant $c_{N,\delta}$ is (in principle) computable and that no compactness is used. And third, we provide a somewhat different and more explicit treatment of the quadratic form which, in some sense, corresponds to the Hessian of the functional under consideration. The latter might be useful in other applications and extensions of Christ's analysis in \cite{Ch}.

We emphasize that the treatment of the Hessian is simpler in our case than in Christ's `three sets' case. We explain this in some more detail in Remark \ref{threesets}.

The overall strategy of Christ's proof in \cite{Ch} and our version of it bears some resemblance with the proof of a quantitative stability theorem for the Sobolev inequality by Bianchi and Egnell \cite{BiEg}, answering a question in \cite{BrLi}; see also \cite{ChFrWe}. This strategy consists of a first step which reduces the assertion to elements close to the set of optimizers and a second step where the inequality is proved close to the set of optimizers by a detailed analysis of the eigenvalues of the Hessian of the corresponding variational problem. Christ's analysis in \cite{Ch} is significantly more involved than this standard strategy and adds an additional step, since the metric in which closeness to the optimizers is measured in the first step ($L^1$ distance) and the metric in which the second step can be carried out (Hausdorff distance) are not equivalent.

\medskip

The ball $B$, centered at the origin, will be fixed throughout the following and we will use the notation
$$
\mathcal I[g,h] := \frac12 \iint_{\R^N\times\R^N} g(x)\,\1_{B}(x-y)\, h(y)\,dx\,dy
$$
and $\mathcal I[g]:=\mathcal I[g,g]$.

\subsection*{Acknowledgements}

The authors are grateful to A. Burchard and M. Christ for helpful remarks.

%%%%%%%%%%%%%%%%%%%%

\section{Proof of the bound for small perturbations}

The core of Theorem \ref{christ} is contained in the following proposition which treats the case where $\rho$ differs from a ball only in a shell around the surface of this ball with relative width of order $A[\rho]$ and where $\rho$ is, in a certain sense, centered at the center of this ball. This corresponds to the arguments in \cite[Sections 6, 7, 10]{Ch}.

\begin{proposition}\label{christproofmain}
For every $0<\delta\leq 1/2$ there are constants $\theta_{N,\delta}>0$, $c_{N,\delta,K}>0$ and $C_{N,\delta,K}<\infty$ such that for all balls $B\subset\R^N$, centered at the origin, and all $\rho\in L^1(\R^N)$ with $0\leq\rho\leq 1$ satisfying
\begin{equation*}
\delta \leq \frac{|B|^{1/N}}{2\,\|\rho\|_1^{1/N}} \leq 1-\delta \,,
\end{equation*}
\begin{equation}
\label{eq:rhocom}
\int_{\R^N} \frac{x_n}{|x|}\, \rho\,dx = 0
\qquad\text{for all}\ n=1,\ldots,N
\end{equation}
and, for the ball $E^*$, centered at the origin, of measure $|E^*|=\int_{\R^N} \rho\,dx$ and $0<\theta\leq\theta_{N,\delta}$,
\begin{equation}
\label{eq:rhosupp}
\1_{(1-\theta)E^*} \leq \rho\leq \1_{(1+\theta)E^*} \,,
\end{equation}
one has
$$
\mathcal I[\rho] \leq \mathcal I[\1_{E^*}] - c_{N,\delta}\, \|\rho\|_1^2\, \left( A[\rho]^2 - C_{N,\delta} \theta^3 \right).
$$
\end{proposition}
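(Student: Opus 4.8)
\emph{Proof proposal.}
The plan is to reduce the estimate to a coercivity property of a quadratic form on the sphere. The inequality is invariant under the simultaneous dilation $\rho\mapsto\rho(\cdot/L)$, $B\mapsto LB$, so I normalise $\|\rho\|_1=1$; then the radius $R$ of $E^*$ depends only on $N$, and $r_B/(2R)\in[\delta,1-\delta]$, where $r_B$ is the radius of $B$. Write $D:=\mathcal I[\1_{E^*}]-\mathcal I[\rho]$ and $f:=\rho-\1_{E^*}$; by \eqref{eq:rhosupp}, $f$ is supported in the shell $S:=(1+\theta)E^*\setminus(1-\theta)E^*$, with $f\le0$ on $E^*$ and $f\ge0$ on $\R^N\setminus E^*$. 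Bilinearity and symmetry of $\mathcal I$ give $D=L[f]+Q[f]$ with $L[f]:=-2\,\mathcal I[\1_{E^*},f]$ and $Q[f]:=-\mathcal I[f]$. Let $W:=\1_{E^*}*\1_B$, a radial and radially non-increasing function with constant value $W_0$ on $\partial E^*$. Since $\int_{\R^N}f\,dx=0$ by the definition of $E^*$, one has $L[f]=-\int_{\R^N}f\,(W-W_0)\,dx=\int_{\R^N}|f|\,|W-W_0|\,dx\ge0$, the last equality because $f\cdot(W-W_0)\le0$ pointwise (by the sign of $f$ and the monotonicity of $W$). The admissibility hypothesis says precisely that for $x\in\partial E^*$ the spheres $\partial E^*$ and $\partial(x+B)$ meet transversally, with an angle bounded below in terms of $N$ and $\delta$; from this one extracts the two facts about $W$ that I will use: $W$ has a Lipschitz gradient near $\partial E^*$ with Lipschitz constant $\le C_{N,\delta}$, and its radial derivative on $\partial E^*$ equals $-\beta_{N,\delta}R^{N-1}$ for some $\beta_{N,\delta}>0$ depending only on $N$ and $\delta$.

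Parametrise $f$ by its radial profile $\mathfrak h(\omega):=\int_0^\infty f(r\omega)\,r^{N-1}\,dr$, a bounded function on $\Sph^{N-1}$, and split $\mathfrak h=\mathfrak h_+-\mathfrak h_-$ into the contributions from $r>R$ and $r<R$, so $\mathfrak h_\pm\ge0$. Since $x_n/|x|=\omega_n$, the constraints $\int_{\R^N}\rho\,dx=|E^*|$ and \eqref{eq:rhocom} say exactly that $\mathfrak h$ is $L^2(\Sph^{N-1})$-orthogonal to the spherical harmonics of degrees $0$ and $1$. Let $\mathcal K$ be the convolution operator on $\Sph^{N-1}$ with zonal kernel $(\omega,\omega')\mapsto\1_B(R\omega-R\omega')$; it is diagonalised by spherical harmonics, and by the Funk--Hecke formula its eigenvalue $\lambda_\ell$ on degree $\ell$ is an explicit Gegenbauer integral depending only on $t_0:=1-r_B^2/(2R^2)\in(-1,1)$. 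A direct computation of $\partial_rW$ on $\partial E^*$ gives $\beta_{N,\delta}=\lambda_1$ (which also reflects the translation invariance of $\mathcal I$). Now two estimates. (i) Taylor-expanding $W$ at $\partial E^*$ (the quadratic remainder costing a factor $1+O_{N,\delta}(\theta)$) and then using the one-dimensional bang--bang bound $\int_0^\infty f(r\omega)(r-R)\,r^{N-1}\,dr\ge(1-O_N(\theta))\,\bigl(\mathfrak h_+(\omega)^2+\mathfrak h_-(\omega)^2\bigr)/(2R^{N-1})$ (sharp for ``staircase'' profiles hugging $\{|x|=R\}$) one obtains
\[
L[f]\ \ge\ (1-C_{N,\delta}\theta)\,\frac{\lambda_1}{2}\int_{\Sph^{N-1}}\bigl(\mathfrak h_+^2+\mathfrak h_-^2\bigr)\,d\omega\,.
\]
(ii) Replacing $\1_B(x-y)$ by $\1_B(R\omega_x-R\omega_y)$ inside $Q[f]=-\tfrac12\iint_{\R^N\times\R^N}f(x)\1_B(x-y)f(y)\,dx\,dy$ produces an error that --- since $x,y\in S$ --- is supported on a set of measure $O_{N,\delta}(\theta^3)$, hence $Q[f]=-\tfrac12\langle\mathfrak h,\mathcal K\mathfrak h\rangle_{L^2(\Sph^{N-1})}+O_{N,\delta}(\theta^3)$.

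The key spectral input is the uniform gap $\sup_{\ell\ge2}\lambda_\ell\le\lambda_1-\gamma_{N,\delta}$ with $\gamma_{N,\delta}>0$: among $\ell\neq0$ the degree-$1$ eigenvalue is the strict maximum, quantitatively and uniformly for $t_0$ in the compact subinterval of $(-1,1)$ determined by $\delta$. Granting this, orthogonality of $\mathfrak h$ to degrees $0$ and $1$ gives $\langle\mathfrak h,\mathcal K\mathfrak h\rangle\le(\lambda_1-\gamma_{N,\delta})\|\mathfrak h\|_{L^2}^2\le(\lambda_1-\gamma_{N,\delta})\int_{\Sph^{N-1}}(\mathfrak h_+^2+\mathfrak h_-^2)\,d\omega$ (using $\mathfrak h^2\le\mathfrak h_+^2+\mathfrak h_-^2$; after harmlessly shrinking $\gamma_{N,\delta}$ one may assume $\lambda_1-\gamma_{N,\delta}\ge0$). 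Adding this to the lower bound for $L[f]$ and choosing $\theta\le\theta_{N,\delta}$ small enough that the relative $O_{N,\delta}(\theta)$ loss is absorbed by the gap, one gets
\[
D\ \ge\ \frac{\gamma_{N,\delta}}{4}\int_{\Sph^{N-1}}\bigl(\mathfrak h_+^2+\mathfrak h_-^2\bigr)\,d\omega\ -\ C_{N,\delta}\,\theta^3\,.
\]
Finally, $\|\rho\|_1\,A[\rho]\le\tfrac12\|\rho-\1_{E^*}\|_1=\tfrac12\int_{\Sph^{N-1}}(\mathfrak h_++\mathfrak h_-)\,d\omega$, and by Cauchy--Schwarz on $\Sph^{N-1}$ the square of the right-hand side is at most $2|\Sph^{N-1}|\int_{\Sph^{N-1}}(\mathfrak h_+^2+\mathfrak h_-^2)\,d\omega$. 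Combining with the previous display and undoing the normalisation $\|\rho\|_1=1$ yields $D\ge c_{N,\delta}\,\|\rho\|_1^2\,A[\rho]^2-C_{N,\delta}\,\|\rho\|_1^2\,\theta^3$, which is the assertion of the proposition.

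I expect the main obstacle to be the uniform spectral gap $\sup_{\ell\ge2}\lambda_\ell<\lambda_1$. Via the Funk--Hecke formula it becomes a comparison of Gegenbauer integrals in which $\ell=1$ is singled out by the translation symmetry of $\mathcal I$; making this quantitative and uniform in $t_0$ (reducing to finitely many $\ell$ by means of $\lambda_\ell\to0$) is where the real work lies, and it is the ``more explicit treatment of the quadratic form / Hessian'' mentioned in the introduction --- lighter here than in Christ's three-set setting, because $\mathcal K$ acts on a single copy of $L^2(\Sph^{N-1})$ and is scalar rather than matrix-valued. The remaining points are routine: the relative $O(\theta)$ errors coming from the Taylor expansion of $W$ and the passage from $f$ to $\mathfrak h$ are harmless because the spectral gap is of order one in $\theta$, whereas the absolute error in step (ii) must be cubic and not merely quadratic in $\theta$ --- since $A[\rho]=O(\theta)$, a quadratic error would swamp the main term, while the $C_{N,\delta}\theta^3$ loss is exactly what the proposition can afford. (Note that, compared with the case where $\rho$ is a characteristic function, the only genuinely new ingredient is the bang--bang bound in step (i), which is what allows one to keep $\mathfrak h_+$ and $\mathfrak h_-$ separated and so to control $\|\rho-\1_{E^*}\|_1$.)
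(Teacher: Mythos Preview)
Your proposal is correct and follows essentially the same approach as the paper's proof: the same decomposition $\mathcal I[\rho]-\mathcal I[\1_{E^*}]=2\mathcal I[\1_{E^*},f]+\mathcal I[f]$, the same radial profiles on $\Sph^{N-1}$ (your $\mathfrak h_\pm$ are the paper's $F^\pm$), the bathtub/``bang--bang'' argument together with a second-order Taylor bound on $W$ (the paper's $\phi$) for the linear term, the same $O(\theta^3)$ replacement of $\1_B(x-y)$ by its spherical version for the quadratic term, and the same spectral gap $\sup_{\ell\ge2}\lambda_\ell<\lambda_1$ via Funk--Hecke (treated in Section~\ref{sec:hessian}). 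The normalizations differ (your $\lambda_1$ equals the paper's $\Gamma=2\lambda_{N,1}$), but the arithmetic matches.
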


For the proof of this proposition we need the following auxiliary result concerning the function
$$
\phi(|x|) := \int_{E^*} \1_B(x-y)\,dy \,.
$$
(Note that the right side only depends on $|x|$.)

\begin{lemma}\label{phi}
Let $0<\delta\leq 1/2$ and let $B,E^*\subset\R^N$ be balls centered at the origin with
\begin{equation}
\label{eq:comparabledelta}
\delta \leq \frac{|B|^{1/N}}{2\,|E^*|^{1/N}} \leq 1-\delta \,.
\end{equation}
Let $R$ be the radius of $E^*$. Then $\phi$ is a non-increasing function of $r$ satisfying
$$
|\phi(r) - \phi(R)| \geq c
\begin{cases}
R^{N-1} |r-R| & \text{if}\ |r-R|\leq \frac12 R \,,\\
\frac12 R^N & \text{if}\ |r-R|> \frac12 R \,,
\end{cases}
$$
and, with $\Gamma:=-R^{-N+1} \phi'(R)$,
$$
|\phi(r) - \phi(R) + \Gamma R^{N-1}(r-R)| \leq C R^{N-2} (r-R)^2
\qquad\text{if}\ |r-R|\leq \tfrac12 R \,.
$$
The constants $c>0$ and $C<\infty$ depend only on $N$ and $\delta$.
\end{lemma}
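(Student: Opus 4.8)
The plan is to reduce the statement to an explicit one‑parameter computation for the volume of the intersection of two balls. Write $x=re_1$ and let $r_B$ denote the radius of $B$, so that the admissibility condition \eqref{eq:comparabledelta} reads $\delta\le r_B/(2R)\le 1-\delta$ and hence $r_B$ and $R$ are comparable with constants depending only on $\delta$. Since $B=-B$, one has $\phi(r)=|E^*\cap(re_1+B)|$, the volume of the intersection of two balls of radii $R$ and $r_B$ whose centres lie at distance $r$. First I would slice this intersection by the hyperplanes $\{y_1=s\}$, use the classical closed form of the resulting volume (a sum of two spherical caps in the overlapping regime), and differentiate in $r$: the two boundary contributions at the radical hyperplane $\{y_1=t(r)\}$ cancel, because the two bounding spheres have the same cross‑section there, and one is left with
\[
  \phi'(r) = -\,\omega_{N-1}\, w(r)^{N-1}, \qquad
  w(r) := \sqrt{(R^2-t(r)^2)_+}, \qquad
  t(r) := \frac{r^2+R^2-r_B^2}{2r},
\]
for every $r>0$, where $\omega_{N-1}$ is the volume of the unit ball in $\R^{N-1}$ and $w(r)$ is the radius of the $(N-1)$‑dimensional disk along which the two bounding spheres meet. (The formula also covers the degenerate configurations: $w(r)=0$, i.e.\ $\phi$ is locally constant, precisely when the two balls are disjoint or nested, since then $|t(r)|\ge R$.) For $N=1$ this reads $\phi'=-1$ on the overlap interval and $0$ outside, and the three assertions are then immediate; so I would assume $N\ge 2$ from here on, in which case $\phi\in C^1((0,\infty))$ and, as $\phi'\le 0$, $\phi$ is non‑increasing.

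The second step is to record two elementary facts about $w$ near $r=R$, both with explicit constants. From $t(R)=R-r_B^2/(2R)$ one gets $w(R)^2=R^2-t(R)^2=r_B^2(1-r_B^2/(4R^2))$, which by admissibility lies between a positive $\delta$‑multiple of $R^2$ and $R^2$; in particular $\Gamma=\omega_{N-1}(w(R)/R)^{N-1}$ is bounded above and below by positive constants depending only on $N$ and $\delta$. Next, $t'(r)=\tfrac12-(R^2-r_B^2)/(2r^2)$ is bounded by an absolute constant on $\{r\ge R/2\}$; together with $r_B\le 2R$ and the lower bound on $w(R)$ this produces explicit $\kappa=\kappa_\delta\in(0,1/2)$ and $c_\delta>0$ with $c_\delta R\le w(r)\le R$ for $|r-R|\le\kappa R$, shows that $w$ is smooth there, and — by differentiating $w=\sqrt{R^2-t^2}$ — gives $|w(r)^{N-1}-w(R)^{N-1}|\le C_{N,\delta}\,R^{N-2}|r-R|$ on that interval.

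The third step assembles the three assertions from the identities $\phi(r)-\phi(R)=-\omega_{N-1}\int_R^r w(\sigma)^{N-1}\,d\sigma$ and $\Gamma R^{N-1}=\omega_{N-1}w(R)^{N-1}$. For the lower bound I would integrate $w(\sigma)^{N-1}\ge(c_\delta R)^{N-1}$ over the portion of the interval between $R$ and $r$ lying in $[R-\kappa R,R+\kappa R]$; this gives $|\phi(r)-\phi(R)|\ge\omega_{N-1}(c_\delta R)^{N-1}\min(|r-R|,\kappa R)$, which is $\gtrsim_{N,\delta}R^{N-1}|r-R|$ when $|r-R|\le R/2$, while for $|r-R|>R/2$ monotonicity reduces the claim to $\phi(R-\kappa R)-\phi(R)$ and $\phi(R)-\phi(R+\kappa R)$, each $\gtrsim_{N,\delta}R^{N}$ by the same estimate. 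For the second‑order bound, $\phi(r)-\phi(R)+\Gamma R^{N-1}(r-R)=-\omega_{N-1}\int_R^r(w(\sigma)^{N-1}-w(R)^{N-1})\,d\sigma$; for $|r-R|\le\kappa R$ the Lipschitz estimate above bounds this by $\tfrac12 C_{N,\delta}R^{N-2}(r-R)^2$, and for $\kappa R<|r-R|\le R/2$ the left‑hand side is $\lesssim_{N,\delta}R^{N}$ (each summand is), while $(r-R)^2\ge\kappa^2R^2$ then gives $R^{N}\lesssim_{\delta}R^{N-2}(r-R)^2$, so it is again $\lesssim_{N,\delta}R^{N-2}(r-R)^2$. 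Since every constant here is produced explicitly, no compactness is involved.

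I expect the one genuinely delicate point — as opposed to routine calculus — to be the derivation and global validity of the formula for $\phi'$: one must check that it persists across the configurations where the two balls become internally or externally tangent, so that $\phi$ is truly $C^1$ there for $N\ge 2$, and deal with $N=1$ by hand. Everything after that is bookkeeping with the explicit function $w$.
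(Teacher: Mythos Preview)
Your proof is correct and follows essentially the same strategy as the paper: identify $\phi(r)$ as the volume of the intersection of two balls, show that $\phi'$ is bounded above by a negative multiple of $R^{N-1}$ on a $\delta$-dependent neighbourhood of $r=R$, integrate for the linear lower bound, and use monotonicity outside that neighbourhood; the Taylor-remainder bound is handled the same way. The only difference is that the paper merely asserts the derivative bound (``one can show that the radial derivative of $\phi$ does not exceed a negative constant \dots''), whereas you derive the closed form $\phi'(r)=-\omega_{N-1}w(r)^{N-1}$ and read the bounds off from it --- a more explicit but not conceptually different route.
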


\begin{proof}[Proof of Lemma \ref{phi}]
We observe that $\phi(|x|)=|B_R(0)\cap B_{\tilde R}(x)|$, where $\tilde R$ denotes the radius of $B$. This implies that $\phi$ is non-decreasing with respect to $|x|$ and has a strictly negative radial derivative in $\{|R-\tilde R|< |x|< R+\tilde R \}$. We also note that assumption \eqref{eq:comparabledelta} means that $\delta \leq \tilde R/(2R) \leq 1-\delta$. Therefore, the set $\{|R-\tilde R|< |x|< R+\tilde R \}$ contains the set $\{ (1-2\delta)R<|x|<(1+2\delta)R\}$ and one can show that the radial derivative of $\phi$ does not exceed a negative constant (depending on $N$ and $\delta$) times $R^{N-1}$ on $\{(1-\delta)R\leq |x|\leq (1+\delta)R\}$. Thus, $|\phi(r)-\phi(R)| \geq c R^{N-1} |r-R|$ if $|r-R|\leq\delta R$. On the other hand, for $r\geq (1+\delta) R$ we have, using monotonicity and the bound we just proved, $\phi(R)-\phi(r)\geq \phi(R)-\phi((1+\delta)R)\geq c R^N \delta$. Similarly, for $r\leq (1-\delta)R$, we have $\phi(r)-\phi(R)\geq\phi((1-\delta)R)-\phi(R)\geq c R^N\delta$. After adjusting the value of the constant $c$ we obtain the bound in the lemma. The second bound in the lemma follows similarly.
\end{proof}

\begin{proof}[Proof of Proposition \ref{christproofmain}]
Throughout the proof, we denote by $R$ the radius of $E^*$.

\emph{Step 1.} 
For $\omega\in\Sph^{N-1}$, let
$$
F^+(\omega) := \int_R^\infty \rho(r\omega) r^{N-1}\,dr \,,
\qquad
F^-(\omega) := \int_0^R (1-\rho(r\omega))r^{N-1}\,dr \,.
$$
We claim that, if \eqref{eq:rhosupp} holds with $\theta\leq1$, then
\begin{align}
\label{eq:comparable}
\|\rho\|_1 \,A[\rho] & \leq C_1 \left( \| F^+ \|_{L^2(\Sph^{N-1})}^2 + \| F^- \|_{L^2(\Sph^{N-1})}^2 \,, \right)^{1/2} \\
\label{eq:comparable1}
\max\{\| F^+ \|_{L^\infty(\Sph^{N-1})},\, \| F^- \|_{L^\infty(\Sph^{N-1})}\} & \leq C_2 \|\rho\|_1 \theta \,,
\end{align}
with $C_1$ depending only on $N$ and $C_2$ depending only on $N$ and $\delta$.

Indeed, we begin by noticing that
\begin{align*}
\| \rho \1_{\R^N\setminus E^*} \|_1 & = \|F^+\|_{L^1(\Sph^{N-1})} \leq |\Sph^{N-1}|^{1/2} \| F^+ \|_{L^2(\Sph^{N-1})} \,, \\
\| (1-\rho)\1_{E^*} \|_1 & = \|F^-\|_{L^1(\Sph^{N-1})} \leq |\Sph^{N-1}|^{1/2} \| F^- \|_{L^2(\Sph^{N-1})} \,, \\
\end{align*}
and therefore
\begin{align*}
2 \|\rho\|_1 \, A[\rho] & \leq \| \rho - \1_{E^*} \|_1 = \| \rho \1_{\R^N\setminus E^*} \|_1 + \| (1-\rho)\1_{E^*} \|_1 \\
& \leq |\Sph^{N-1}| \left( \| F^+ \|_{L^2(\Sph^{N-1})} + \| F^- \|_{L^2(\Sph^{N-1})} \right),
\end{align*}
which proves \eqref{eq:comparable}. 

To prove \eqref{eq:comparable1} we use \eqref{eq:rhosupp} and obtain
\begin{align*}
\| F^+\|_{L^\infty(\Sph^{N-1})} & = \sup_{\omega\in\Sph^{N-1}} \int_R^{(1+\theta)R} \rho(r\omega)\,r^{N-1}\,dr \leq \int_R^{(1+\theta)R} \,r^{N-1}\,dr \\
& = \frac{R^N}{N} \left( (1+\theta)^N - 1 \right) \leq \frac{2^N-1}{N} \theta\, R^N = \frac{2^N-1}{|\Sph^{N-1}|}\,\theta\, \|\rho\|_1
\end{align*}
and, similarly,
$$
\| F^-\|_{L^\infty(\Sph^{N-1})} \leq \frac{2^N-1}{|\Sph^{N-1}|}\,\theta\, \|\rho\|_1 \,.
$$
This completes the proof of \eqref{eq:comparable1}.

\medskip

\emph{Step 1.5.} With $f := \rho - \1_{E^*}$ we have
\begin{equation}
\label{eq:expansionmain}
\mathcal I[\rho] = \mathcal I[\1_{E^*}] + 2\, \mathcal I[\1_{E^*},f] + \mathcal I[f] \,.
\end{equation}
In the following two steps we shall show that
\begin{align}
\label{eq:cinteraction}
2\, \mathcal I[\1_{E^*},f] & \leq - \tfrac12 \Gamma \left(\|F^+\|_{L^2(\Sph^{N-1})}^2 + \|F^-\|_{L^2(\Sph^{N-1})}^2 \right) \notag \\
& \, \quad + C R^{-N} \left(\|F^+\|_{L^3(\Sph^{N-1})}^3 + \|F^-\|_{L^3(\Sph^{N-1})}^3 \right)
\end{align}
and
\begin{equation}
\label{eq:cselfenergy}
\left| \mathcal I[f] - \mathcal Q[F^+-F^-] \right| \leq C \, \theta \max\{\|F^+\|_{L^\infty(\Sph^{N-1})}^2,\,\|F^-\|_{L^\infty(\Sph^{N-1})}^2 \}
\end{equation}
with a constant $C$ depending only on $N$ and $\delta$. Here
$$
\mathcal Q[G]:= \frac12 \iint_{\Sph^{N-1}\times\Sph^{N-1}} G(\omega)\1_B(\omega-\omega')\, G(\omega')\,d\omega\,d\omega' \,.
$$

\medskip

\emph{Step 2.} We prove \eqref{eq:cinteraction}. This is \cite[Lemma 16]{Ch}. Since $f$ has integral zero, we have
$$
2\, \mathcal I[\1_E^*,f] = \int_{\R^N} (\phi-\phi(R))f\,dx
$$
with $\phi$ from Lemma \ref{phi}. Let $f^+:=\rho\1_{\R^N\setminus E^*}$ and $f^-:=(1-\rho)\1_{E^*}$, so that $f=f^+-f^-$ and
$$
2\, \mathcal I[\1_E^*,f] = - \int_{\R^N} (\phi(R)-\phi)f^+ \,dx - \int_{\R^N} (\phi-\phi(R))f^- \,dx \,.
$$
Note that by the support properties of $f^+$ and $f^-$ and by the monotonicity of $\phi$ from Lemma \ref{phi} the integrand in both integrals is non-negative. We concentrate on proving a lower bound on the first integral, the second one being similar. We have
$$
\int_{\R^N} (\phi(R)-\phi)f^+\,dx = \int_{\Sph^{N-1}} \left( \int_R^\infty (\phi(R)-\phi(r)) f^+(r\omega) r^{N-1}\,dr \right) d\omega \,.
$$
By the bathtub principle \cite[Thm.~1.14]{LiLo}, using the monotonicity of $\phi$ and the fact that $0\leq f^+\leq 1$, we have, for each $\omega\in\Sph^{N-1}$,
$$
\int_R^\infty (\phi(R)-\phi(r)) f^+(r\omega) r^{N-1}\,dr \geq \int_R^{R^+(\omega)} (\phi(R)-\phi(r)) r^{N-1}\,dr \,,
$$
where $R^+(\omega)$ is uniquely determined by
$$
\int_R^{R^+(\omega)} r^{N-1}\,dr = \int_R^\infty f^+(r\omega)r^{N-1}\,dr \,.
$$
Recalling the definition of $F^+(\omega)$, we find that
$$
R^+(\omega) = (R^N - N F^+(\omega))^{1/N} \,.
$$
Recall that by Step 1,
\begin{equation}
\label{eq:f+bound}
R^{-N} \|F^+\|_{L^\infty(\Sph^{N-1})} \leq \frac{2^N-1}{N}\, \theta \,,
\end{equation}
and therefore, in particular, $R^+(\omega)\leq (3/2)R$ if $\theta$ does not exceed a small constant depending only on $N$. Therefore we can apply the second order Taylor expansion on $\phi$ from Lemma \ref{phi} and obtain
\begin{align*}
\int_R^{R^+(\omega)} (\phi(R)-\phi(r)) r^{N-1}\,dr & \geq \Gamma R^{N-1} \int_R^{R^+(\omega)} (r-R) r^{N-1}\,dr \\
& \quad - C R^{N-2} \int_R^{R^+(\omega)} (r-R)^2 r^{N-1}\,dr \,.
\end{align*}
Inserting the explicit expression for $R^+(\omega)$ and using the bound \eqref{eq:f+bound} to control higher order terms, it is easy to see that the right side is bounded from below by
$$
\tfrac12 \Gamma F^+(\omega)^2 - C' R^{-N} F^+(\omega)^3 \,.
$$
Integrating this bound with respect to $\omega$, we obtain inequality \eqref{eq:cinteraction}.

\medskip

\emph{Step 3.} We prove \eqref{eq:cselfenergy}. This is \cite[Lemma 17]{Ch}. Note that the first term on the left side of \eqref{eq:cselfenergy} is
$$
\frac12 \iint_{\Sph^{N-1}\times\Sph^{N-1}} \int_0^\infty \int_0^\infty f(r\omega) \1_{\{|r\omega - s\omega|\leq \tilde R\}} f(s\omega')\,r^{N-1}\,dr\,s^{N-1}\,ds \,d\omega\,d\omega'
$$
and second one is
$$
\frac12 \iint_{\Sph^{N-1}\times\Sph^{N-1}} F(\omega) \1_{\{R|\omega-\omega'|<\tilde R\}} F(\omega') \,d\omega\,d\omega' \,,
$$
where we abbreviated $F:=F^+-F^-$ and denoted the radius of $B$ by $\tilde R$.

For all $\omega,\omega'\in\Sph^{N-1}$ with $|R|\omega-\omega'|-\tilde R|\geq 2\theta R$ we have
$$
\1_{\{|r\omega - s\omega|\leq \tilde R\}} = \1_{\{R|\omega-\omega'|<\tilde R\}}
\qquad\text{for all}\ r,s\in \left[(1-\theta)R,(1+\theta)R \right]
$$
and consequently, by integration and the support property \eqref{eq:rhosupp},
$$
\int_0^\infty \int_0^\infty f(r\omega) \1_{\{|r\omega - s\omega|\leq \tilde R\}} f(s\omega')\,r^{N-1}\,dr\,s^{N-1}\,ds = F(\omega) \1_{\{R|\omega-\omega'|<\tilde R\}} F(\omega') \,.
$$
Thus, the difference on the left side of \eqref{eq:cselfenergy} comes at most from the set of $(\omega,\omega')\subset\Sph^{N-1}\times\Sph^{N-1}$ such that  $|R|\omega-\omega'| - \tilde R| <2 \theta R$. Clearly, this set has measure at most $C \theta$, where $C$ depends only on $N$ and $\delta$. Since $\int_0^\infty f^\pm(r\omega)r^{N-1}\,dr \leq \|F^\pm\|_{L^\infty(\Sph^{N-1})}$, we obtain the claimed bound.

\medskip

\emph{Step 4.} We now conclude the proof. It is shown in \cite[Corollary 23]{Ch} that
$$
\mathcal Q[G] \leq A\, \Gamma\, \|G\|_{L^2(\Sph^{N-1})}^2
\qquad\text{with a constant}\ A<\tfrac12
$$
for all $G\in L^2(\Sph^{N-1})$ satisfying
\begin{equation}
\label{eq:appfortho0}
\int_{\Sph^{N-1}} G(\omega)\,d\omega = \int_{\Sph^{N-1}} \omega_n G(\omega)\,d\omega = 0 \,,
\qquad\text{for all}\ n=1,\ldots,N \,.
\end{equation}
Note that if $N=1$ this statement is trivial since \eqref{eq:appfortho0} implies that $G\equiv 0$. We will give an alternative proof of the above inequality in Section \ref{sec:hessian}. Now we apply this inequality to $F=F^+-F^-$ and note that the first condition in \eqref{eq:appfortho0} is satisfied since $|E^*|=\int_{\R^N} \rho\,dx$ and the second collection of conditions is satisfied because of \eqref{eq:rhocom}. We also use $F^\pm\geq 0$ in order to bound
$$
\|F\|_{L^2(\Sph^{N-1})}^2\!= \|F^+\|_{L^2(\Sph^{N-1})}^2 + \|F^-\|_{L^2(\Sph^{N-1})}^2 - 2 \langle F^+,F^-\rangle \leq \|F^+\|_{L^2(\Sph^{N-1})}^2 + \|F^-\|_{L^2(\Sph^{N-1})}^2.
$$
Collecting \eqref{eq:expansionmain}, \eqref{eq:cinteraction} and \eqref{eq:cselfenergy} we obtain
$$
\mathcal I[\rho] \leq \mathcal I[\1_{E^*}] - \left(\tfrac12 - A \right) \Gamma \left( \|F^+\|_{L^2(\Sph^{N-1})}^2 + \|F^-\|_{L^2(\Sph^{N-1})}^2 \right) + \mathcal R
$$
with
$$
\mathcal R := C R^{-N} \!\left(\|F^+\|_{L^3(\Sph^{N-1})}^3 \!+ \|F^-\|_{L^3(\Sph^{N-1})}^3 \right) + C \theta \max\left\{\|F^+\|_{L^\infty(\Sph^{N-1})}^2,\, \|F^-\|_{L^\infty(\Sph^{N-1})}^2 \right\}\!.
$$
Using \eqref{eq:comparable1}, we bound
\begin{align*}
\|F^\pm \|_{L^3(\Sph^{N-1})}^3 & \leq \|F^\pm\|_{L^\infty(\Sph^{N-1})} \|F^\pm\|_{L^2(\Sph^{N-1})}^2 \leq C_2 \|\rho\|_1 \theta \|F^\pm\|_{L^2(\Sph^{N-1})}^2 \\
& = C_2' R^N \theta \|F^\pm\|_{L^2(\Sph^{N-1})}^2
\end{align*}
and
$$
\|F^\pm \|_{L^\infty(\Sph^{N-1})}^2 \leq C_2^2\, \theta^2\, \|\rho\|_1^2 \,.
$$

To summarize, we have shown that
$$
\mathcal I[\rho] \leq \mathcal I[\1_{E^*}] - \left(\left(\tfrac12 - A \right) \Gamma - C\,C_2'\, \theta \right) \left( \|F^+\|_{L^2(\Sph^{N-1})}^2 + \|F^-\|_{L^2(\Sph^{N-1})}^2 \right) + C C_2^2\theta^3 \|\rho\|_1^2 \,.
$$
We can bound $\left(\tfrac12 - A \right) \Gamma\geq\tau>0$, where $\tau>0$ depends only on $N$ and $\delta$. This follows, for instance, from the expressions of $\Gamma$ and $A$ in Section \ref{sec:hessian}. Thus, if $\theta\leq \tau/(2CC_2')$, then we have a remainder $\|F^+\|_{L^2(\Sph^{N-1})}^2 + \|F^-\|_{L^2(\Sph^{N-1})}^2$, which, by \eqref{eq:comparable}, is comparable with $\|\rho\|_1^2 \, A[\rho]^2$. This proves the proposition.
\end{proof}

%%%%%%%%%%%%%%%%%%%%%

\section{Reduction to the case of small perturbations}

The following proposition, analogous to \cite[Sec.~5]{Ch}, proves Theorem \ref{christ} for all $\rho$ with $A[\rho]$ small enough. The argument proceeds by reduction to Proposition \ref{christproofmain}.

\begin{proposition}\label{christproofreduction}
For every $0<\delta\leq 1/2$ there are constants $\alpha_{N,\delta}>0$ and $c_{N,\delta}>0$ such that for all balls $B\subset\R^N$, centered at the origin, and all $\rho\in L^1(\R^N)$ with $0\leq\rho\leq 1$,
\begin{equation*}
\delta \leq \frac{|B|^{1/N}}{2\,\|\rho\|_1^{1/N}} \leq 1-\delta
\qquad\text{and}\qquad
A[\rho]\leq\alpha_{N,\delta}
\end{equation*}
one has
$$
\mathcal I[\rho] \leq \mathcal I[\1_{E^*}] - c_{N,\delta} \, \|\rho\|_1^2\, A[\rho]^2 \,,
$$
where $E^*$ is the ball, centered at the origin, of measure $|E^*|=\int_{\R^N}\rho\,dx$.
\end{proposition}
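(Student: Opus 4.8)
The plan is to deduce the proposition from Proposition~\ref{christproofmain} in three moves: normalize $\rho$ by translations; dispose of a ``large deficit'' case using only Lemma~\ref{phi}; and, in the complementary case, replace $\rho$ by a function $\tilde\rho$ obeying the support condition \eqref{eq:rhosupp} at a scale comparable to $A[\rho]$, for which $A[\tilde\rho]$ is comparable to $A[\rho]$ and $\mathcal I[\tilde\rho]$ is no smaller than $\mathcal I[\rho]$ up to a negligible error. Throughout, $R$ is the radius of $E^*$, so $R^N$ is a fixed $N$-dependent multiple of $\|\rho\|_1$. Since $\mathcal I$, $\|\cdot\|_1$ and $A[\cdot]$ are translation invariant, I would first translate $\rho$ so that $\|\rho-\1_{E^*}\|_1=2\|\rho\|_1 A[\rho]$ (the infimum defining $A[\rho]$ is attained, because $a\mapsto\int_{E^*+a}\rho\,dx$ is continuous and vanishes at infinity), and then translate by a further vector of length $O(\|\rho\|_1^{1/N}A[\rho])$ so that \eqref{eq:rhocom} holds. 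For $N\ge 2$ the existence of the second vector follows from the inverse function theorem: the map $a\mapsto\int \frac{(x-a)_n}{|x-a|}\rho\,dx$ takes a value of size $O(\|\rho\|_1 A[\rho])$ at $a=0$ and has differential there close to that of the corresponding map for $\1_{E^*}$, which equals $-\kappa_N R^{N-1}$ times the identity for an explicit $\kappa_N>0$; the required closeness of the differentials uses that if $\rho$ has non-negligible mass inside $\tfrac12 E^*$ then we are already in the large-deficit case of the next step. (For $N=1$ the vector is found directly, since $a\mapsto\int\sgn(x+a)\rho\,dx$ is non-decreasing and runs from $-\|\rho\|_1$ to $\|\rho\|_1$.) From now on $\|\rho-\1_{E^*}\|_1\le C_N\|\rho\|_1 A[\rho]$ and \eqref{eq:rhocom} holds.

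Next, writing $f:=\rho-\1_{E^*}=f^+-f^-$ with $f^+:=\rho\1_{\R^N\setminus E^*}$ and $f^-:=(1-\rho)\1_{E^*}$, and using that $f$ has integral zero, one has $\mathcal I[\1_{E^*}]-\mathcal I[\rho]=D_1-\mathcal I[f]$ with
$$
D_1:=-2\,\mathcal I[\1_{E^*},f]=\int_{\R^N}(\phi(R)-\phi)\,f^+\,dx+\int_{\R^N}(\phi-\phi(R))\,f^-\,dx\ge 0
$$
by the monotonicity of $\phi$ from Lemma~\ref{phi}, while $|\mathcal I[f]|\le\|f\|_1^2\le C_N^2\|\rho\|_1^2 A[\rho]^2$. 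I would fix $\theta:=C_0 A[\rho]$ with a large constant $C_0=C_0(N,\delta)$ chosen at the very end, and set $\mu_+:=\|\rho\1_{\{|x|>(1+\theta)R\}}\|_1$, $\mu_-:=\|(1-\rho)\1_{\{|x|<(1-\theta)R\}}\|_1$ and $\sigma:=\mu_++\mu_-$. The first bound of Lemma~\ref{phi} gives $D_1\ge \gamma_{N,\delta}\,\theta\,\|\rho\|_1\,\sigma$ for a positive $\gamma_{N,\delta}$. Hence, if $\sigma\ge\epsilon_1\|\rho\|_1 A[\rho]$ for a small $\epsilon_1=\epsilon_1(N,\delta)$ to be fixed, then $\mathcal I[\1_{E^*}]-\mathcal I[\rho]\ge\bigl(\gamma_{N,\delta}C_0\epsilon_1-C_N^2\bigr)\|\rho\|_1^2 A[\rho]^2$, which is the desired estimate once $C_0$ is large. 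So we may assume $\sigma<\epsilon_1\|\rho\|_1 A[\rho]$.

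In this complementary case I would define $\tilde\rho$ by deleting $\rho\1_{\{|x|>(1+\theta)R\}}$ and redepositing that mass, keeping values in $[0,1]$, in the shell $\{R<|x|<(1+\theta)R\}$, and by raising $\rho$ to $1$ on $\{|x|<(1-\theta)R\}$ while removing an equal amount of mass from the shell $\{(1-\theta)R<|x|<R\}$. Because $\sigma$ is much smaller than the volumes of these shells (of order $C_0\|\rho\|_1 A[\rho]$), this can be arranged with $\|\tilde\rho\|_1=\|\rho\|_1$, and then by construction $\1_{(1-\theta)E^*}\le\tilde\rho\le\1_{(1+\theta)E^*}$. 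The crucial point is that in passing from $\rho$ to $\tilde\rho$ mass is only moved towards smaller $|x|$, where $\phi$, being non-increasing, is larger, so $\int_{\R^N}\phi\,(\tilde\rho-\rho)\,dx\ge 0$; since $2\mathcal I[\rho,\cdot]$ has integral kernel $\phi(|\cdot|)+\1_B*f$, this yields
$$
\mathcal I[\rho]-\mathcal I[\tilde\rho]=-\int_{\R^N}\phi\,(\tilde\rho-\rho)\,dx-\int_{\R^N}(\1_B*f)\,(\tilde\rho-\rho)\,dx-\mathcal I[\tilde\rho-\rho]\le C_N\|\rho\|_1 A[\rho]\,\sigma\le C_N\epsilon_1\|\rho\|_1^2 A[\rho]^2,
$$
using $\|\1_B*f\|_\infty\le\|f\|_1$ and $\|\tilde\rho-\rho\|_1\le 2\sigma$. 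Moreover $|A[\tilde\rho]-A[\rho]|\le\|\tilde\rho-\rho\|_1/(2\|\rho\|_1)\le\epsilon_1 A[\rho]$, so $A[\tilde\rho]\ge\tfrac12 A[\rho]$. After one more translation of $\tilde\rho$ by a vector of length $O(\|\rho\|_1^{1/N}A[\rho])$ restoring \eqref{eq:rhocom}, which enlarges the support parameter only to $\theta':=C_0'A[\rho]\le\theta_{N,\delta}$ once $\alpha_{N,\delta}$ is small, Proposition~\ref{christproofmain} applied to this translate gives
$$
\mathcal I[\tilde\rho]\le\mathcal I[\1_{E^*}]-c_{N,\delta}\|\rho\|_1^2\bigl(A[\tilde\rho]^2-C_{N,\delta}\,(C_0'A[\rho])^3\bigr)\le\mathcal I[\1_{E^*}]-\tfrac18 c_{N,\delta}\|\rho\|_1^2 A[\rho]^2
$$
for $\alpha_{N,\delta}$ small. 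Combining the last two displays and choosing first $\epsilon_1$ small, then $C_0$ large, then $\alpha_{N,\delta}$ small would complete the proof.

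The step that genuinely requires care is the modification in the last paragraph: one must enforce the Hausdorff-type condition \eqref{eq:rhosupp} at the small scale $\sim A[\rho]$ while losing at most $o(\|\rho\|_1^2 A[\rho]^2)$ in $\mathcal I$ and at most a constant factor in $A[\rho]$. A crude mass-transport estimate only bounds the loss in $\mathcal I$ by $O(\|\rho\|_1^2 A[\rho]^2)$ with an uncontrolled constant; the improvement comes from arranging the transport so that $\int_{\R^N}\phi\,(\tilde\rho-\rho)\,dx\ge 0$ — which is possible precisely because the transported mass lies strictly outside the shell of width $\theta$, on whose boundary $\phi$ takes an intermediate value — together with the fact, delivered by the Lemma~\ref{phi} dichotomy, that the transported mass $\sigma$ is only of the smaller size $\epsilon_1\|\rho\|_1 A[\rho]$. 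This is the analogue, in our simpler setting, of Christ's additional step in \cite{Ch} reconciling the $L^1$ and Hausdorff metrics; everything else is explicit, so the resulting $c_{N,\delta}$ is computable and no compactness is used.
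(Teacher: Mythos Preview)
Your proof is correct and follows the paper's overall strategy: center $\rho$ so that the infimum in $A[\rho]$ is attained at the origin, fix a shell of width $\theta\sim A[\rho]$, dispose via Lemma~\ref{phi} of the case where a fixed fraction of the deficit mass lies outside the shell, and otherwise pass to a competitor $\tilde\rho$ obeying the support condition \eqref{eq:rhosupp}, recenter it (Lemma~\ref{com}), and invoke Proposition~\ref{christproofmain}. The one substantive difference is in how $\tilde\rho$ is built. The paper uses the explicit Lemma~\ref{competitor}, whose construction arranges $\tilde\rho\ge\rho$ in $E^*$ and $\tilde\rho\le\rho$ outside, together with property \eqref{eq:rhotilde4} (at least half of $\|\tilde\rho-\rho\|_1$ sits outside the shell); the sign pattern then turns $2\,\mathcal I[\rho-\tilde\rho,\1_{E^*}]$ into $-\int|\rho-\tilde\rho|\,|\phi-\phi(R)|$, and \eqref{eq:rhotilde4} extracts the gain $\gtrsim\theta R^N\|\rho-\tilde\rho\|_1$. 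You instead transport mass strictly inward (from $\{|x|>(1+\theta)R\}$ into the outer half-shell, and from the inner half-shell into $\{|x|<(1-\theta)R\}$), so that $\int\phi\,(\tilde\rho-\rho)\ge 0$ by monotonicity of $\phi$, and control the remaining cross term by $\|f\|_1\|\tilde\rho-\rho\|_1\le C\epsilon_1\|\rho\|_1^2A[\rho]^2$. Your variant bypasses the somewhat intricate Lemma~\ref{competitor} at the price of an extra parameter $\epsilon_1$ and a slightly more delicate order of choosing constants ($\epsilon_1$ first, then $C_0$, then $\alpha_{N,\delta}$). Two minor remarks: your initial translation of $\rho$ to enforce \eqref{eq:rhocom} is never used (only the translated $\tilde\rho$ needs \eqref{eq:rhocom}) and can simply be dropped; and the ``inverse function theorem'' justification you sketch for that step is shakier than the Brouwer argument of Lemma~\ref{com}, since $x/|x|$ fails to be $C^1$ at the origin --- but as the step is redundant this does not affect the argument.
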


To deduce this proposition from Proposition \ref{christproofmain} we need to argue that the support condition \eqref{eq:rhosupp} and the centering condition \eqref{eq:rhocom} can be imposed. The necessary preliminaries are given in the following two lemmas.

\begin{lemma}\label{competitor}
Let $\rho\in L^1(\R^N)$ with $0\leq\rho\leq 1$. Let $B$ be a ball of measure $\int_{\R^N}\rho\,dx$ and let $0\leq\theta\leq 1$. Then there is a $\tilde\rho\in L^1(\R^N)$ with
\begin{equation}
\label{eq:rhotilde1}
\int_{\R^N}\rho'\,dx = \int_{\R^N} \rho\,dx \,,
\end{equation}
\begin{equation}
\label{eq:rhotilde2}
\1_{(1-\theta)B} \leq \tilde \rho \leq \1_{(1+\theta)B} \,,
\end{equation}
\begin{equation}
\label{eq:rhotilde3}
\tilde \rho \geq \rho
\quad\text{in}\ B
\qquad\text{and}\qquad
\tilde \rho \leq \rho
\quad\text{in}\ \R^N\setminus B \,,
\end{equation}
\begin{equation}
\label{eq:rhotilde5}
\int_{\R^N} |\tilde\rho-\1_B| \,dx \leq \int_{\R^N} |\rho-\1_B|\,dx
\end{equation}
and
\begin{equation}
\label{eq:rhotilde4}
\int_{(1-\theta)B \cup(\R^N\setminus (1+\theta)B)} |\tilde\rho-\rho| \,dx \geq \frac12 \int_{\R^N} |\tilde\rho-\rho|\,dx \,.
\end{equation}
\end{lemma}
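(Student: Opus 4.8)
The plan is to construct $\tilde\rho$ by a radial "bathtub"-type redistribution that pushes the mass of $\rho$ toward the ball $B$, confined to the shell $(1+\theta)B\setminus(1-\theta)B$. Concretely, I would first set $\tilde\rho = 1$ on $(1-\theta)B$ and $\tilde\rho = 0$ outside $(1+\theta)B$; this immediately forces \eqref{eq:rhotilde2}. On the shell $S:=(1+\theta)B\setminus(1-\theta)B$ I would define $\tilde\rho$ to saturate $\rho$ as much as possible toward $\1_B$ while preserving total mass: inside the inner part of the shell $S\cap B$ increase $\rho$ up to the value $1$ (in some prescribed order, e.g.\ from the outside of $B$ inward, or uniformly --- the precise rule is immaterial), and on the outer part $S\setminus B$ decrease $\rho$ down to $0$, choosing the cutoff level so that the gain of mass inside equals the loss of mass outside plus the net change forced by the boundary prescription on $(1-\theta)B$ and outside $(1+\theta)B$. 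Since $0\le\rho\le1$ and $\int\rho = |B|$, and since setting $\rho=1$ on all of $B$ and $\rho=0$ outside would move mass $\ge \int|\rho-\1_B|$ in each direction by symmetry of the constraint $\int\rho=|B|$, there is always enough "room" in the shell to achieve the mass balance; this is where one invokes that $\int_B(\1_B-\rho) = \int_{\R^N\setminus B}\rho$, i.e.\ the deficit inside equals the excess outside. This gives \eqref{eq:rhotilde1}.

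The monotonicity property \eqref{eq:rhotilde3} is built into the construction: in $B$ we only ever raise $\rho$ (toward $1$), in $\R^N\setminus B$ we only ever lower it (toward $0$). Property \eqref{eq:rhotilde5} then follows pointwise: on $B$, $0\le \1_B-\tilde\rho \le \1_B-\rho$, and on $\R^N\setminus B$, $0\le\tilde\rho = \tilde\rho-\1_B\cdot 0 \le \rho$, so $|\tilde\rho-\1_B|\le|\rho-\1_B|$ a.e., and we integrate. For \eqref{eq:rhotilde4}, the point is that the changes $\tilde\rho-\rho$ located inside the shell and the changes located in the "forced" region $(1-\theta)B\cup(\R^N\setminus(1+\theta)B)$ have opposite signs relative to $\1_B$ but the mass moved in each region is comparable; more precisely, by \eqref{eq:rhotilde1} the total positive part of $\tilde\rho-\rho$ equals the total negative part, and all the positive part that does \emph{not} sit in $(1-\theta)B$ must sit in $S\cap B$, while all the negative part not in $\R^N\setminus(1+\theta)B$ sits in $S\setminus B$. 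Splitting $\int|\tilde\rho-\rho|$ according to these four regions and using the mass balance, one sees that the contribution from $(1-\theta)B\cup(\R^N\setminus(1+\theta)B)$ is at least half the total --- indeed the "forced" contribution on $(1-\theta)B$ equals (by mass balance on that side) a controlled fraction, and a clean way to get the constant $1/2$ is to note that in our construction the increase on $(1-\theta)B$ (namely $\int_{(1-\theta)B}(1-\rho)$) together with the decrease on $\R^N\setminus(1+\theta)B$ (namely $\int_{\R^N\setminus(1+\theta)B}\rho$) already accounts for one of the two equal halves of $\int|\tilde\rho-\rho|$, since every unit of mass removed from $(1-\theta)B\cup(\R^N\setminus(1+\theta)B)$ in the "wrong" direction would have to be... --- so actually the cleanest route is: either at least half of the transported mass originates in the forced region, in which case \eqref{eq:rhotilde4} is immediate, or at least half originates in the shell, but mass taken into $S\cap B$ must be balanced by mass leaving, and by construction the shell is filled/emptied to the extreme values $1$ and $0$ before any further adjustment, forcing the balancing mass out through the forced region.

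I expect the main obstacle to be the bookkeeping in \eqref{eq:rhotilde4}: making the "order of filling" precise enough that the $1/2$ comes out cleanly, while keeping \eqref{eq:rhotilde1}--\eqref{eq:rhotilde3} simultaneously valid. The safe approach is to fix an explicit recipe --- first saturate $(1-\theta)B$ to $1$ and $\R^N\setminus(1+\theta)B$ to $0$, record the resulting mass imbalance $m$, then correct it entirely within the shell by a single bathtub adjustment (raise $\rho$ to $1$ inward from $\partial B$ inside, or lower $\rho$ to $0$ outward from $\partial B$ outside, whichever sign $m$ has) --- and then verify each displayed property against this recipe; all verifications are then short pointwise or single-integral estimates. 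A minor point to watch: the statement writes $\rho'$ in \eqref{eq:rhotilde1} where it clearly means $\tilde\rho$, so one reads \eqref{eq:rhotilde1} as $\int\tilde\rho = \int\rho$.
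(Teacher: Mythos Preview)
Your ``safe approach'' at the end is exactly the paper's construction: set $\tilde\rho=1$ on $(1-\theta)B$, $\tilde\rho=0$ outside $(1+\theta)B$, and compensate the resulting mass imbalance $m_i-m_o$ (with $m_i:=\int_{(1-\theta)B}(1-\rho)$ and $m_o:=\int_{\R^N\setminus(1+\theta)B}\rho$) by a single radial cutoff in the shell, leaving $\rho$ unchanged elsewhere in the shell. Properties \eqref{eq:rhotilde1}--\eqref{eq:rhotilde3} and \eqref{eq:rhotilde5} follow pointwise as you say.

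The one place where your write-up has a gap is \eqref{eq:rhotilde4}: the paragraph beginning ``either at least half of the transported mass\ldots'' does not reach a conclusion, and the dichotomy you set up is not the right mechanism. With your explicit recipe the bookkeeping is in fact a one-liner. The forced change satisfies
\[
\int_{(1-\theta)B\,\cup\,(\R^N\setminus(1+\theta)B)}|\tilde\rho-\rho|\,dx = m_i+m_o,
\]
while the shell correction, being chosen precisely to restore mass balance, has $L^1$ norm exactly $|m_i-m_o|$. Since $|m_i-m_o|\le m_i+m_o$, the forced contribution is at least half of the total, which is \eqref{eq:rhotilde4}. This is what the paper does; once you commit to the explicit recipe and record these two integrals, the constant $1/2$ drops out without any further case analysis.
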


This lemma is an extension of the procedure for characteristic functions described in \cite[Sec.~5]{Ch} to functions with values between zero and one. It appears in \cite{FrLi}, but we repeat the proof for the sake of completeness.

\begin{proof}
By translation and scale invariance, we may assume that $B$ is the ball of radius $1$ centered at the origin. Let
$$
m_i:= \int_{\{|x|<1-\theta\}} (1-\rho)\,dx
\qquad\text{and}\qquad
m_o:= \int_{\{|x|>1+\theta\}} \rho\,dx \,.
$$

If $m_i\geq m_o$, we choose $r_o$ such that
$$
\int_{\{|x|>r_o\}} \rho\,dx = m_i
$$
and note that $r_o\leq 1+\theta$. On the other hand, $r_0\geq 1$ since, using the fact that $\rho-\1_B$ has integral zero,
$$
\int_{\{|x|>1\}} \rho\,dx = \int_{\R^N} (\rho-\1_B)_+\,dx = \int_{\R^N} (\rho-\1_B)_-\,dx = \int_B (1-\rho)\,dx \geq m_i \,.
$$
We set
$$
\tilde\rho := \rho\1_{\{|x|\leq r_o\}} + (1-\rho)\1_{\{|x|\leq 1-\theta\}} \,.
$$

If $m_i<m_o$, we choose $r_i$ such that
$$
\int_{\{|x|<r_i\}} (1-\rho)\,dx = m_o
$$
and note that $r_i\geq 1-\theta$. On the other hand, $r_i\leq 1$ by the same computation that showed $r_o\geq 1$. We set
$$
\tilde\rho := \rho\1_{\{|x|\leq 1+\theta\}} + (1-\rho)\1_{\{|x|\leq r_i\}} \,.
$$

In both cases, the properties \eqref{eq:rhotilde1}, \eqref{eq:rhotilde2} and \eqref{eq:rhotilde3} follow immediately from the construction. Moreover, property \eqref{eq:rhotilde5} follows immediately from \eqref{eq:rhotilde3}. In order to prove \eqref{eq:rhotilde5} we set
$$
A := \{ 1-\theta \leq |x| <1+\theta\} \,,
$$
so \eqref{eq:rhotilde5} is equivalent to
$$
\int_{\R^N\setminus A} |\tilde\rho-\rho|\,dx \geq \int_A |\tilde\rho-\rho|\,dx \,.
$$
To unify the treatment of the two cases we set $\rho_i=1-\theta$ if $m_i\geq m_o$ and $\rho_o=1+\theta$ if $m_i<m_o$, so that in both cases
$$
\tilde\rho = \1_{\{|x|\leq r_i \}} + \rho \1_{\{r_i<|x|\leq r_o\}} \,.
$$
Thus,
$$
\int_A |\tilde\rho-\rho|\,dx = \int_{\{1-\theta\leq|x|< r_i\}}(1-\rho)\,dx + \int_{\{r_o\leq |x|< 1+\theta\}} \rho\,dx
$$
We claim that
$$
\int_A |\tilde\rho-\rho|\,dx \leq \max\{m_i,m_o\} \,.
$$
Indeed, if $m_i\geq m_o$, then the set $\{1-\theta\leq|x|< r_i\}$ is empty and
$$
\int_{\{r_o\leq |x|< 1+\theta\}} \rho\,dx = m_o - \int_{\{|x|\geq 1+\theta\}} \rho\,dx \leq m_0 = \max\{m_i,m_o\} \,,
$$
and similarly if $m_i<m_o$. On the other hand,
$$
\int_{\R^N\setminus A} |\tilde\rho-\rho|\,dx = \int_{\{|x|<1-\theta\}}(1-\rho)\,dx + \int_{\{|x|\geq 1+\theta\}} \rho\,dx = m_i + m_o \geq \max\{m_i,m_o\} \,,
$$
as claimed. This completes the proof.
\end{proof}

The second ingredient in the proof of Proposition \ref{christproofreduction} is the following lemma, which shows that the centralization condition \eqref{eq:rhocom} can be imposed. It is the analogue of \cite[Lemma 26]{Ch}.

\begin{lemma}\label{com}
There are $\theta_0>0$ and $C<\infty$ such that for all $0<\theta\leq\theta_0$, all $\rho\in L^1(\R^N)$ with $0\leq\rho\leq 1$ satisfying
$$
\1_{(1-\theta)E^*} \leq\rho\leq \1_{(1+\theta)E^*} \,,
$$
where $E^*\subset\R^N$ is the ball, centered at the origin, with
$$
\int_{\R^N} \rho\,dx = |E^*| \,,
$$
there is an $a\in\R^N$ such that
\begin{align}
\label{eq:com1}
\int_{\R^N} \frac{x_n}{|x|}\,\rho(x+a)\,dx & = 0 \qquad\text{for all}\ n=1,\ldots,N \,,\\
\label{eq:com2}
|a| & \leq C\|\rho\|_1^{1/N} \theta \,,\\
\label{eq:com3}
\1_{(1-C\theta)E^*} & \leq\rho(\cdot+a)\leq \1_{(1+C\theta)E^*} \,. 
\end{align}
\end{lemma}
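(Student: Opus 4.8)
The plan is to impose the centering condition \eqref{eq:com1} by an application of the Brouwer fixed point theorem (or, equivalently, a degree-theoretic argument), following the scheme of \cite[Lemma 26]{Ch}. Consider the map
$$
\Psi(a) := \left( \int_{\R^N} \frac{x_n}{|x|}\, \rho(x+a)\,dx \right)_{n=1,\ldots,N} = \left( \int_{\R^N} \frac{(x-a)_n}{|x-a|}\, \rho(x)\,dx \right)_{n=1,\ldots,N}
$$
defined for $a$ in a small closed ball $\overline{B_\varrho(0)}\subset\R^N$ whose radius $\varrho$ will be chosen comparable to $\|\rho\|_1^{1/N}\theta$. We seek a zero of $\Psi$ in this ball. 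The idea is that, because $\rho$ is squeezed between $\1_{(1-\theta)E^*}$ and $\1_{(1+\theta)E^*}$, the map $\Psi$ is a small perturbation of the corresponding map for $\1_{E^*}$, and the latter has a nondegenerate zero at $a=0$ that pushes the perturbed map around the boundary sphere with nonzero degree.

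The key steps, in order, are as follows. First I would compute the map $\Psi_0(a) := \int_{E^*} \frac{(x-a)_n}{|x-a|}\,dx$ associated to the characteristic function of the ball itself. By symmetry $\Psi_0(0)=0$, and a direct computation (differentiating under the integral sign, using that $x/|x|$ is smooth away from the origin and that for $|a|$ small the singularity stays well inside $E^*$) shows that $D\Psi_0(0)$ is a negative multiple of the identity — more precisely $\Psi_0(a) = -\kappa_N |E^*|^{1-1/N}\, a + O(|a|^2)$ for an explicit dimensional constant $\kappa_N>0$; here the scaling factor $|E^*|^{1-1/N}\sim R^{N-1}$ comes from the fact that the integrand lives on a ball of radius $R$. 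Hence for $\varrho$ a small enough multiple of $R=c_N\|\rho\|_1^{1/N}$ one has $\Psi_0(a)\cdot a \leq -\tfrac12 \kappa_N R^{N-1}\varrho^2 < 0$ on the sphere $|a|=\varrho$, so $\Psi_0$ points strictly inward and has degree $(-1)^N$ on $B_\varrho(0)$. Second, I would estimate the difference $|\Psi(a)-\Psi_0(a)|$. Since $|\rho - \1_{E^*}|$ is supported in the shell $\{(1-\theta)R\le|x|\le(1+\theta)R\}$ and bounded by $1$ there, while the integrand $\tfrac{(x-a)_n}{|x-a|}$ is bounded by $1$ in modulus, we get $|\Psi(a)-\Psi_0(a)| \le \|\rho-\1_{E^*}\|_1 \le C_N R^N\theta$. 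For $\theta$ small relative to $\varrho/R \sim $ const, this perturbation is smaller in norm than $\tfrac12\kappa_N R^{N-1}\varrho$, so $\Psi$ still points strictly inward on $|a|=\varrho$; by homotopy invariance of the degree (the straight-line homotopy $t\Psi+(1-t)\Psi_0$ never vanishes on the boundary sphere), $\Psi$ has a zero $a$ in $B_\varrho(0)$. This zero satisfies \eqref{eq:com1} by construction and \eqref{eq:com2} with $|a|\le\varrho = C\|\rho\|_1^{1/N}\theta$. Third, \eqref{eq:com3} is then immediate: shifting the support condition \eqref{eq:rhosupp} by $a$ and using $|a|\le C\|\rho\|_1^{1/N}\theta = C' R\theta$, the ball $(1\mp\theta)E^*$ translated by $-a$ still contains / is contained in $(1\mp C''\theta)E^*$, with $C''=C''(N)$; one chooses $\theta_0$ small enough that $C''\theta_0<1$ so that these dilated balls are genuinely nested around the origin.

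The main obstacle is the second step — controlling the perturbation uniformly enough that the degree argument goes through — and in particular getting the \emph{right} powers of $R$ in both the linearization of $\Psi_0$ and the error bound, so that the smallness requirement on $\theta$ is dimensionless and depends only on $N$ (and $\delta$, through the range of $R$). One must be a little careful that the radius $\varrho$ of the ball of candidate translations is chosen proportional to $R$ rather than to $R\theta$: if $\varrho$ were too small the inward-pointing estimate $\tfrac12\kappa_N R^{N-1}\varrho > C_N R^N\theta$ would fail. The cleanest way to organize this is to rescale at the outset so that $E^*$ is the unit ball (then $\|\rho\|_1=|\Sph^{N-1}|/N$ is fixed), carry out the fixed-point argument with a radius $\varrho_0$ depending only on $N$, and undo the scaling at the end, which produces the factor $\|\rho\|_1^{1/N}$ in \eqref{eq:com2}; the hypothesis $\delta \le |B|^{1/N}/(2\|\rho\|_1^{1/N}) \le 1-\delta$ is not actually needed for this lemma (it only constrains $B$, which plays no role here) and can be omitted from the statement, though keeping it does no harm.
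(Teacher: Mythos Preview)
Your approach is essentially the paper's: define the vector field $\Psi(a)=\int \frac{x}{|x|}\,\rho(x+a)\,dx$ (the paper works with $F=-\Psi$), linearize at $a=0$ using the unperturbed ball, bound the perturbation by $\|\rho-\1_{E^*}\|_1\le C_N R^N\theta$, and invoke Brouwer/degree on a sphere $|a|=\varrho$ on which the field points strictly inward. Continuity of $\Psi$, the computation of $D\Psi_0(0)$, and the derivation of \eqref{eq:com3} from \eqref{eq:com2} are all handled the same way.

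There is, however, an internal inconsistency in your choice of $\varrho$ which, if resolved as your ``obstacle'' paragraph recommends, breaks the proof. You open by taking $\varrho$ comparable to $\|\rho\|_1^{1/N}\theta$ and later write ``$|a|\le\varrho=C\|\rho\|_1^{1/N}\theta$'', which is what is needed for \eqref{eq:com2}. But in between you write ``$\varrho/R\sim\text{const}$'', and in the obstacle paragraph you explicitly advocate $\varrho$ proportional to $R$ rather than $R\theta$, with the ``cleanest'' version taking $\varrho_0$ depending only on $N$ after rescaling to the unit ball. Following that advice you would only obtain $|a|\le c\,R$, and \eqref{eq:com2} would fail: the factor $\|\rho\|_1^{1/N}$ appears upon undoing the scaling, but the factor $\theta$ does not.

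The correct resolution---and this is exactly what the paper does---is to take $\varrho=C_0\,\|\rho-\1_{E^*}\|_1$ (equivalently $\varrho\sim C_0 R\theta$) with $C_0$ a \emph{sufficiently large} dimensional constant. Your worry that the inward-pointing estimate $\tfrac12\kappa_N R^{N-1}\varrho>C_N R^N\theta$ ``would fail'' for $\varrho\sim R\theta$ is unfounded: with $\varrho=C_0 R\theta$ this becomes $\tfrac12\kappa_N C_0>C_N$, which holds once $C_0>2C_N/\kappa_N$. The quadratic error in the linearization of $\Psi_0$ is then of order $C_0^2\theta$ relative to the main term and is absorbed by taking $\theta_0$ small. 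In the paper's normalization ($R=1$) the balance reads
$$
a\cdot\bigl(-\Psi(a)\bigr)\ \ge\ \frac{|\Sph^{N-1}|}{N}\,|a|^2-\bigl(C'|a|^3+|a|\,\|\rho-\1_{E^*}\|_1\bigr),
$$
and one sets $|a|=C_0\|\rho-\1_{E^*}\|_1$ with $C_0=2N/|\Sph^{N-1}|$. With this single correction your sketch goes through.
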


\begin{proof}[Proof of Lemma \ref{com}]
By scaling it suffices to prove this lemma in the case where $\|\rho\|_1=1$, so $E^*$ is the unit ball. We need to prove that the vector field
$$
F(a) := - \int_{\R^N} \frac{x}{|x|}\,\rho(x+a)\,dx
$$
has a zero in a ball of radius of the order of $\theta$. By dominated convergence, $F$ is continuous. Moreover, we claim that there are $c_0>0$ and $C_0<\infty$ such that, if $\|\rho-\1_{E^*}\|_1 \leq c_0$, then $a\cdot F(a)\geq 0$ for all $|a|=C_0\|\rho-\1_{E^*}\|_1$. By Brouwer's fixed point theorem, this implies that $F$ has a zero in the ball $\{|a|\leq C_0\|\rho-\1_{E^*}\|_1\}$. Since
$$
\|\rho-\1_{E^*}\|_1 \leq |E^*\setminus (1-\theta)E^*| + |(1+\theta)E^*\setminus E^*| = \left( 1- (1-\theta)^N \!\!+ (1+\theta)^N \!\!- 1 \right)|E^*| \leq C \theta,
$$
this implies \eqref{eq:com1} and \eqref{eq:com2} in the statement of the lemma with $\theta_0= c_0/C$. Property \eqref{eq:com3} is a consequence of \eqref{eq:com2} and the corresponding property of $\rho$.

It remains to prove the above claim. To do so, we set $f:=\rho-\1_{E^*}$, so that
\begin{equation}
\label{eq:comproof}
F(a) = -\int_{\R^N} \frac{x}{|x|} \left(\1_{E^*}(x+a) - \1_{E^*}(x)\right)dx - \int_{\R^N} \frac{x}{|x|}f(x+a)\,dx \,.
\end{equation}
We assume in what follows that $|a|\leq 1/2$, so $\1_{E^*}(\cdot+a) - \1_{E^*}(\cdot)$ is supported in $\{||x|-1|\leq 1/2\}$. Therefore, if $g_1,\ldots,g_N$ are $C^2$ functions which coincide with $x_1/|x|,\ldots,x_N/|x|$ on $\{||x|-1|\leq 1/2\}$, then
\begin{align*}
\int_{\R^N} \frac{x_n}{|x|} \left(\1_{E^*}(x+a) - \1_{E^*}(x)\right)dx & = \int_{\R^N} g_n(x) \left(\1_{E^*}(x+a) - \1_{E^*}(x)\right)dx \\
& = \int_{E^*} \left( g_n(x-a)-g_n(x) \right)dx \\
& = - a \cdot \int_{E^*} \nabla g_n(x)\,dx + \mathcal O(a^2) \,.
\end{align*}
A straightforward computation gives
\begin{align*}
a \cdot \int_{E^*} \nabla g_n(x)\,dx = a \cdot \int_{\partial E^*} x g_n(x)\,d\sigma(x) =
a \cdot \int_{\partial E^*} x x_n\,d\sigma(x) = a_n \frac{|\Sph^{N-1}|}{N} \,.
\end{align*}
Inserting this bound into the form \eqref{eq:comproof} of $F$ we obtain
$$
a\cdot F(a) \geq \frac{|\Sph^{N-1}|}{N} |a|^2 - \left( C' |a|^3 + |a| \|f\|_1 \right).
$$
In particular, if $|a|=C_0\|f\|_1$ for a constant $C_0$ to be determined, then
$$
a\cdot F(a) = C_0 \|f\|_1^2 \left( \frac{|\Sph^{N-1}|\,C_0}{N} - \left( C' C_0^2 \|f\|_1 + 1 \right)\right).
$$
Choosing $C_0=2N/|\Sph^{N-1}|$ and $\|f\|_1\leq (C'C_0^2)^{-1}=:c_0$, we obtain $a\cdot F(a)\geq 0$, as claimed. This completes the proof of the lemma.
\end{proof}

Finally, we can prove the main result of this section.

\begin{proof}[Proof of Proposition \ref{christproofreduction}]
As before, we denote the radius of $E^*$ by $R$.

\emph{Step 1.} We assume that $A[\rho]<1$. Then it is easy to see that the infimum defining $A[\rho]$ is attained and after a translation, if necessary, we may assume that it is attained at $a=0$. Thus,
$$
\| \rho - \1_{E^*} \|_1 = 2 \,\|\rho\|_1\, A[\rho]  \,. 
$$
We now apply Lemma \ref{competitor} with the ball $E^*$ and a parameter $0\leq\theta\leq 1/2$ to be determined. We obtain a $\tilde \rho\in L^1(\R^N)$ satisfying \eqref{eq:rhotilde1}, \eqref{eq:rhotilde2}, \eqref{eq:rhotilde3}, \eqref{eq:rhotilde5} and \eqref{eq:rhotilde4} with $B$ replaced by $E^*$. We write
\begin{align}\label{eq:expansionred}
\mathcal I[\rho] - \mathcal I[\tilde\rho] & = \mathcal I[\rho-\tilde\rho,\rho+\tilde\rho] = 2\, \mathcal I[\rho-\tilde\rho,\1_{E^*}] + \mathcal I[\rho-\tilde\rho,\rho+\tilde\rho - 2\cdot\1_{E^*}] \,.
\end{align}

We begin with the first term on the right side of \eqref{eq:expansionred}, which is the main term. Using the fact that $\rho-\tilde\rho$ has integral zero, as well as the monotonicity of $\phi$ from Lemma \ref{phi} and property \eqref{eq:rhotilde2}, we obtain
$$
2\, \mathcal I[\rho-\tilde\rho,\1_{E^*}] = \int_{\R^N} (\rho-\tilde\rho)(\phi-\phi(R))\,dx = - \int_{\R^N} |\rho-\tilde\rho| |\phi-\phi(R)|\,dx \,.
$$
Using the bound from Lemma \ref{phi} (recalling that $\theta\leq 1/2$) as well as \eqref{eq:rhotilde4} we obtain
$$
\int_{\R^N} |\rho-\tilde\rho| |\phi-\phi(R)|\,dx \geq c \theta R^N \int_{||x|-R|\geq\theta R} |\rho-\tilde\rho|\,dx \geq \frac{c}{2} \theta R^N \|\tilde\rho - \rho\|_1 \,.
$$
We now bound the second term on the right side of \eqref{eq:expansionred}. By \eqref{eq:rhotilde5} we obtain
$$
\mathcal I[\rho-\tilde\rho,\rho+\tilde\rho - 2\cdot\1_{E^*}]
\leq \tfrac12 \|\rho-\tilde\rho\|_1 \|\rho+\tilde\rho- 2\cdot 1_{E^*}\|_1 \leq \|\rho-\tilde\rho\|_1 \|\rho-\1_{E^*}\|_1 \,.
$$
Inserting these two bounds in \eqref{eq:expansionred}, we have
\begin{align*}
\mathcal I[\rho] - \mathcal I[\tilde\rho] & \leq - \frac c2 \theta R^N \|\tilde\rho - \rho\|_1 + \|\rho-\tilde\rho\|_1 \|\rho-\1_{E^*}\|_1 \\
& = - \left( \frac{N\,c\,\theta}{2\,|\Sph^{N-1}|} - 2\,A[\rho] \right) \|\rho\|_1\, \|\rho-\tilde\rho\|_1 
\,.
\end{align*}
We now assume that
$$
A[\rho] \leq \frac{N\,c}{12\,|\Sph^{N-1}|}
$$
and choose
$$
\theta := \frac{6\,|\Sph^{N-1}|}{N\,c} \,A[\rho] \leq \frac12 \,. 
$$
Thus, the above inequality becomes
\begin{equation}
\label{eq:expansionred2}
\mathcal I[\rho] - \mathcal I[\tilde\rho] \leq - A[\rho] \, \|\rho\|_1\, \|\rho-\tilde\rho\|_1  \,.
\end{equation}

We now distinguish two cases according to the relative size of $\|\rho-\tilde\rho\|_1$ and $\|\rho-\1_{E^*}\|_1$. If $\|\rho-\tilde\rho\|_1 \geq \frac12 \|\rho-\1_{E^*}\|_1$, then inequality \eqref{eq:expansionred2} implies
$$
\mathcal I[\rho] - \mathcal I[\tilde\rho] \leq - \tfrac12 \, A[\rho] \, \|\rho\|_1\, \|\rho-\1_{E^*} \|_1 = - \|\rho\|_1^2 A[\rho]^2
$$
and we are done since, by Riesz, $\mathcal I[\tilde\rho]\leq \mathcal I[\1_{E^*}]$.

We are left with dealing with the case $\|\rho-\tilde\rho\|_1 < \frac12 \|\rho-\1_{E^*}\|_1 = \|\rho\|_1\, A[\rho]$. In this case we have, for any $a\in\R^N$,
$$
\|\tilde\rho - \1_{E^*+a}\|_1 \geq \|\rho-\1_{E^*+a}\|_1 - \|\rho-\tilde\rho\|_1 \geq 2 \, \|\rho\|_1 \, A[\rho] - \|\rho\|_1 \, A[\rho] = \|\rho\|_1 \, A[\rho] \,,
$$
and therefore
\begin{equation}
\label{eq:acompetitor}
A[\tilde\rho] \geq \frac12\, A[\rho] \,.
\end{equation}
Similarly,
$$
\|\tilde\rho - \1_{E^*}\|_1 \leq \|\rho-\1_{E^*}\|_1 + \|\rho-\tilde\rho\|_1 \leq 2 \|\rho\|_1 \, A[\rho] + \|\rho\|_1 \, A[\rho] = 3\, \|\rho\|_1 \, A[\rho] \,,
$$
and therefore
\begin{equation}
\label{eq:acompetitor2}
A[\tilde\rho]\leq \frac32\, A[\rho] \,.
\end{equation}

By \eqref{eq:acompetitor} and property \eqref{eq:rhotilde2} of $\tilde\rho$,
$$
\1_{(1-K A[\tilde\rho])E^*} \leq \tilde\rho \leq \1_{(1+K A[\tilde\rho])E^*}
\qquad\text{with}\qquad
K := 2\theta A[\rho]^{-1} = \frac{12\,|\Sph^{N-1}|}{N\,c} \,.
$$

Next, we apply Lemma \ref{com} to $\tilde\rho$. In order to apply this lemma, we need to assume that $K A[\tilde\rho]\leq\theta_0$, which, by \eqref{eq:acompetitor2}, is guaranteed if we assume $A[\rho]\leq (2/(3K))\theta_0$, as we may do. Therefore Lemma \ref{com} provides an $a\in\R^N$ such that
\begin{align}
\label{eq:com1appl}
\int_{\R^N} \frac{x_n}{|x|}\,\tilde\rho(x+a)\,dx & = 0 \qquad\text{for all}\ n=1,\ldots,N \,,\\
\label{eq:com3appl}
\1_{(1-C K A[\tilde\rho(\cdot +a)])E^*} & \leq \tilde\rho(\cdot+a)\leq \1_{(1+C K A[\tilde\rho(\cdot+a)])E^*} \,. 
\end{align}
In the last equation, we used the fact that $A[\rho(\cdot+a)]=A[\rho]$. 

Applying Proposition \ref{christproofmain} to $\tilde\rho(\cdot+a)$, we obtain $c_{N,\delta}>0$ such that
$$
\mathcal I[\tilde\rho(\cdot+a)] \leq \mathcal I[\1_{E^*}] - \frac{c_{N,\delta}}2 \| \tilde\rho\|_1^2\, A[\tilde\rho(\cdot+a)]^2
$$
provided that $K A[\tilde\rho(\cdot+a)]\leq\theta_{N,\delta}$ and $C_{N,\delta} K^3 A[\tilde\rho(\cdot +a)]\leq 1/2$. (The last condition is needed to absorb the term $C_{N,\delta} \theta^3$ in Proposition \ref{christproofmain}.) According to \eqref{eq:acompetitor2}, these conditions are satisfied if $A[\rho]\leq (2/3) \min\{\theta_{N,\delta}/K,(2C_{N,\delta} K^3)^{-1}\}$. Using $\mathcal I[\rho]\leq\mathcal I[\tilde\rho]=\mathcal I[\tilde\rho(\cdot+a)]$ from \eqref{eq:expansionred2} as well as \eqref{eq:acompetitor}, we obtain
$$
\mathcal I[\rho] \leq \mathcal I[\1_{E^*}] - \frac{c_{N,\delta}}8 \|\rho\|_1^2\, A[\rho]^2 \,,
$$
which is the claimed inequality.
\end{proof}

%%%%%%%%%%%%%%%%%%%%%%%%%%%%

\section{Proof of Theorem \ref{christ}}

Given the two propositions from the two previous sections there are several ways to complete the proof of Theorem \ref{christ}. The quickest way is probably by using compactness, namely if $(\rho_n)$ is a sequence with $0\leq\rho_n\leq 1$ and $\int_{\R^N} \rho_n\,dx \to |E^*|$ and $\mathcal I[\rho_n]\to \mathcal I[\1_{E^*}]$, then $A[\rho_n]\to 0$. (Note that, in order to get the uniformity in $|B|$, one needs to rescaled the original sequence to achieve that $|B|$ is constant.) The drawback of this method is that, even in principle, it does not lead to an explicit value of the constant $c_{N,\delta}$ in the theorem. Another way to finish the proof of Theorem \ref{christ} would be to extend the flow from \cite[Prop.~11]{Ch}, which continuously transforms a set into a ball of the same volume, from sets to densities $\rho$ with $0\leq\rho\leq 1$. This is somewhat technical. Instead we choose a third way, which extends an argument in \cite{ChIl}.

\begin{proof}[Proof of Theorem \ref{christ}]
In view of Proposition \ref{christproofreduction}, it suffices to prove the inequality for $\rho$ with $A[\rho]>\alpha_{N,\delta}$ for the constant $\alpha_{N,\delta}>0$ from that proposition. Let
$$
\mathcal D[\rho] := \mathcal I[\1_{E^*}] - \mathcal I[\rho] \,.
$$

\emph{Step 1.} We first prove the inequality in the case where $\rho=\1_E$ is the characteristic function of a set $E$. Here we simply repeat the argument from \cite[Sec.~4]{Ch}. Namely, by \cite[Prop.~11]{Ch} there is an $L^1$-continuous, measure-preserving set-valued function $[0,1]\ni t\mapsto E(t)$ with $E(0)=E$ and $E(1)=E^*$. Since $t\mapsto A[\1_{E(t)}]$ is continuous, there is a $t_0\in (0,1)$ such that $A[E(t_0)]=\alpha_{N,\delta}$. Since $t\mapsto \mathcal I[\1_{E(t)}]$ is non-decreasing and by Proposition \ref{christproofreduction} and the fact that $A[\1_E]\leq 1$, we have
\begin{align*}
\mathcal D[\1_E] & = \mathcal I[\1_{E^*}] - \mathcal I[\1_{E(0)}] \geq \mathcal I[\1_{E^*}] - \mathcal I[\1_{E(t_0)}] \geq c_{N,\delta} |E(t_0)|^2 A[\1_{E(t_0)}]^2 \\
& = c_{N,\delta} \alpha_{N,\delta}^2 |E|^2 \geq c_{N,\delta} \alpha_{N,\delta}^2 |E|^2 \, A[\1_E]^2 \,.
\end{align*}
This is the claimed inequality.

\medskip

\emph{Step 2.} We show that there is a constant $c'_{N,\delta}>0$ such that for any $B$ and $\rho$ as in the theorem there is a set $E\subset\R^N$ with $|E|=\int_{\R^N}\rho\,dx$ such that
$$
\mathcal D[\rho] \geq c'_{N,\delta} \|\rho-\1_E \|_1^2 \,.
$$

Indeed, by the Riesz inequality and the bathtub principle \cite[Thm.~1.14]{LiLo} we have $\mathcal I[\rho]\leq\mathcal I[\rho^*] \leq \mathcal I[\1_{E^*},\rho^*]$ and therefore, similarly as in the proof of Proposition \ref{christproofreduction},
\begin{align*}
\mathcal D[\rho] & \geq \mathcal I[\1_{E^*},\1_{E^*}-\rho^*] = \int_{\R^N} (\1_{E^*}-\rho^*)(\phi-\phi(R))\,dx  = \!\int_{\R^N} |\1_{E^*}-\rho^*||\phi-\phi(R)|\,dx
\end{align*}
where $R$ is, as before, the radius of the ball $E^*$ of measure $|E^*|=\int_{\R^N}\rho\,dx$ and where $\phi$ is as in Lemma \ref{phi}. Using the bound on $\phi$ from that lemma we obtain
$$
\mathcal D[\rho] \geq c R^{N-1} \int_{||x|-R|\leq R/2} ||x|-R| |\1_{E^*}-\rho^*|\,dx + \frac c2 R^N \int_{||x|-R|>R/2} |\1_{E^*}-\rho^*|\,dx \,.
$$
Using the one-dimensional inequality $\|\psi\|_\infty \||\cdot|\psi\|_1 \geq (1/4)\|\psi\|_1^2$ we obtain
$$
\int_{||x|-R|\leq R/2} ||x|-R| |\1_{E^*}-\rho^*|\,dx \geq \frac{1}{4} \frac{1}{(3R/2)^{N-1}} \left( \int_{||x|-R|\leq R/2} |\1_{E^*}-\rho^*|\,dx \right)^2 \,.
$$
Thus, the above lower bound on $\mathcal D[\rho]$ becomes
$$
\mathcal D[\rho] \geq \frac{2^{N-1}\, c}{4\cdot 3^{N-1}}\, \|\rho^*-\1_{E^*}\|_1^2 \left( \theta^2 + a (1-\theta) \right)
$$
with
$$
\theta := \frac{\int_{||x|-R|\leq R/2} |\rho^*-\1_{E^*}|\,dx}{\int_{\R^N} |\rho^*-\1_{E^*}|\,dx} 
\qquad\text{and}\qquad
a := \frac{3^{N-1}}{2^{N-2}} \ \frac{R^N}{\|\rho^*-\1_{E^*}\|_1} \,.
$$
Note that $\theta\in[0,1]$. Since for any $b\geq 0$,
$$
\inf_{0\leq t\leq 1} \left( t^2 + b(1-t) \right) =
\begin{cases}
b(1-b/4) & \text{if}\ b<2 \,,\\
1 & \text{if}\ b\geq 2 \,,
\end{cases}
$$
and since
$$
a = \frac{3^{N-1}}{2^{N-2}} \ \frac{N}{|\Sph^{N-1}|}\, \frac{|E^*|}{\|\rho^*-\1_{E^*}\|_1} \geq \frac{3^{N-1}}{2^{N-1}} \ \frac{N}{|\Sph^{N-1}|} \,.
$$
we obtain
$$
\mathcal D[\rho] \geq c_{N,\delta}' \|\rho^*-\1_{E^*}\|_1^2
$$
with some constant $c_{N,\delta}'>0$ depending only on $N$ and $\delta$. Since $\rho^*$ is a rearrangement of $\rho$, there is a set $E\subset\R^N$ with $|E|=|E^*|=\int_{\R^N}\rho\,dx$ such that $\|\rho^*-\1_{E^*}\|_1 = \|\rho - \1_E\|_1$. This proves the claimed inequality.

\medskip

\emph{Step 3.} We prove that the set $E$ from Step 2 satisfies
$$
A[\1_E] \leq c_{N,\delta}^{-1/2} \|\rho\|_1^{-1} \left( \mathcal D[\rho] + \|\rho\|_1 \, \|\rho-\1_E\|_1 \right)^{1/2},
$$
where $c_{N,\delta}$ is the constant such that Theorem \ref{christ} holds for characteristic functions (which has been proved in Step 1). 

Indeed, we have
\begin{align*}
\mathcal D[\1_E] & = \mathcal D[\rho] + \mathcal I[\rho] - \mathcal I[\1_E] = \mathcal D[\rho] + \mathcal I[\rho+\1_E,\rho-\1_E] \leq \mathcal D[\rho] + \|\rho\|_1 \|\rho-\1_E\|_1 \,.
\end{align*}
Combining this with the lower bound on $\mathcal D[\1_E]$ from Theorem \ref{christ} we obtain the claimed bound.

\medskip

\emph{Step 4.} Let us complete the proof of Theorem \ref{christ}. By Step 3 we have
\begin{align*}
A[\rho] & \leq (2\|\rho\|_1)^{-1} \|\rho - \1_E\|_1 + A[\1_E] \\
& \leq \|\rho\|_1^{-1} \left( 2^{-1} \|\rho-\1_E\|_1 + c_{N,\delta}^{-1/2} \left( \mathcal D[\rho] + \|\rho\|_1 \, \|\rho-\1_E\|_1 \right)^{1/2} \right).
\end{align*}
Thus, by Step 2,
$$
A[\rho] \leq \|\rho\|_1^{-1} \left( 2^{-1} (c_{N,\delta}')^{-1/2}\, \mathcal D[\rho]^{1/2} + c_{N,\delta}^{-1/2} \left( \mathcal D[\rho] + (c_{N,\delta}')^{-1/2} \|\rho\|_1 \,\mathcal D[\rho]^{1/2} \right)^{1/2} \right).
$$
This implies, with a constant $c_{N,\delta}''>0$ depending only on $N$ and $\delta$,
$$
A[\rho] \leq (c_{N,\delta}'')^{-1/2} \|\rho\|_1^{-1}
\times
\begin{cases}
\mathcal D[\rho]^{1/2} & \text{if}\ \mathcal D[\rho]\geq \|\rho\|_1^2 \,,\\
\|\rho\|_1^{-1/2} \mathcal D[\rho]^{1/4} & \text{if}\ \mathcal D[\rho]< \|\rho\|_1^2 \,.
\end{cases}
$$
If $\mathcal D[\rho]\geq \|\rho\|_1^2$, this is the claimed inequality. If $\mathcal D[\rho]<\|\rho\|_1^2$ we use
$$
A[\rho] \geq \alpha_{N,\delta}^{1/2}\, A[\rho]^{1/2} \,,
$$
and obtain again the claimed inequality. This concludes the proof of the theorem.
\end{proof}

%%%%%%%%%%%%%%%%%

\section{Explicit spectral analysis}\label{sec:hessian}

Throughout this appendix, we assume $N\geq 2$. The paper \cite{Ch} contains a soft and quite general, but rather lengthy argument which shows that the quadratic form
$$
\mathcal Q[F] := \frac12 \iint_{\Sph^{N-1}\times\Sph^{N-1}} F(\omega) \1_{\{|\omega-\omega'|<\tilde R/R\}} F(\omega')\,d\omega\,d\omega'
$$
is bounded from above by
\begin{equation}
\label{eq:christhessianbound}
A\, \Gamma\, \|F\|_{L^2(\Sph^{N-1})}^2
\qquad\text{with a constant}\ A<\tfrac12
\end{equation}
for all $F\in L^2(\Sph^{N-1})$ satisfying
\begin{equation}
\label{eq:appfortho}
\int_{\Sph^{N-1}} F(\omega)\,d\omega = \int_{\Sph^{N-1}} \omega_n F(\omega)\,d\omega = 0 \,,
\qquad\text{for all}\ n=1,\ldots,N \,;
\end{equation}
see \cite[Sections 7 and 10]{Ch}. Here $R$ and $\tilde R$ denote the radii of $E^*$ and $B$, respectively, and the constant $\Gamma$ was defined in Lemma \ref{phi}. In this appendix we reprove the bound \eqref{eq:christhessianbound} by computing explicitly the eigenvalues of $Q$. The possibility of such a proof is mentioned in \cite[Section 8]{Ch}. This might be useful in applications where more precise information about these eigenvalues is needed.

For simplicity of notation we set $a:= \tilde R^2/(2R^2)$, so that
$$
\{ |\omega-\omega'|<\tilde R/R\} = \{ \omega\cdot\omega'> 1- a \} \,.
$$
The quadratic form $\mathcal Q$ is invariant under rotations and therefore is diagonal with respect to a decomposition into spherical harmonics. By the Funk--Hecke formula, the eigenvalue $\lambda_{N,\ell}$ of $\mathcal Q$ on spherical harmonics of degree $\ell$ is given by
$$
\lambda_{N,\ell} = \tfrac12 |\Sph^{N-2}| (C_\ell^{((N-2)/2)}(1))^{-1} \int_{1-a}^1 C_\ell^{((N-2)/2)}(t) (1-t^2)^{(N-3)/2}\,dt \,.
$$
Here $C_\ell^{(\alpha)}$ denote the Gegenbauer polynomials, see \cite[Chapter 22]{AbSt}. For $\ell=0$ we have $C_0^{(\alpha)}\equiv 1$ for all $\alpha\geq 0$, and therefore
$$
\lambda_{N,0} = \tfrac12 |\Sph^{N-2}| \int_{1-a}^1 (1-t^2)^{(N-3)/2}\,dt \,.
$$
We now compute $\lambda_{N,\ell}$ for $\ell\geq 1$. We write the above integral as the difference of an integral from 0 to 1 and one from 0 to $1-a$ and apply the formula \cite[(22.13.2)]{AbSt}. (When $N=2$, we first need to insert \cite[(22.5.4)]{AbSt} into \cite[(22.13.2)]{AbSt} to obtain a formula for the integral of $C_n^{(0)}$.) Then we use twice the formula \cite[(22.2.3)]{AbSt}
$$
C_n^{(\alpha)}(1) = \frac{(n+2\alpha-1)!}{n!\ (2\alpha-1)!} \quad\text{if}\ \alpha>0 \,,
\qquad
C_n^{(0)}(1) = \frac{2}{n} \quad\text{if}\ n\neq 0 \,,
$$
to simplify the resulting expression. We finally obtain
$$
\lambda_{N,\ell} = \frac{|\Sph^{N-2}|}{2(N-1)}\ \frac{C^{(N/2)}_{\ell-1}(1-a)}{C^{(N/2)}_{\ell-1}(1)} \, \left( 1 - (1-a)^2 \right)^{(N-1)/2} 
$$
This is the claimed formula for the eigenvalue.

Recall that $\Gamma$ was defined in Lemma \ref{phi}. We now show that
\begin{equation}
\label{eq:evtransl}
\tfrac12 \Gamma = \lambda_{N,1} \,.
\end{equation}
We have, in the sense of distributions,
$$
\nabla \phi(x) = \int_{E^*} \nabla_x \1_B(x-y)\,dy = - \int_{E^*} \nabla_y \1_B(x-y)\,dy = - \int_{\partial E^*} \nu_y \1_B(x-y)\,d\sigma(y) \,.
$$
Thus, for any $x=R\omega\in\partial E^*$ and introducing variables $y=R\omega'$ and $t=\omega\cdot\omega'$,
\begin{align*}
\phi'(R) & = - \int_{\partial E^*} \frac{x\cdot y}{|x|\,|y|} \1_B(x-y)\,d\sigma(y)
= - R^{N-1} \int_{\Sph^{N-1}} \omega\cdot\omega'\, \1_B(R(\omega-\omega'))\,d\omega' \\
& = - R^{N-1} |\Sph^{N-2}| \int_{1-a}^1 t (1-t^2)^{(N-3)/2}\,dt 
= - R^{N-1} |\Sph^{N-2}| \int_{|1-a|}^1 t (1-t^2)^{(N-3)/2}\,dt \\
& = - \tfrac12 R^{N-1} |\Sph^{N-2}| \int_{(1-a)^2}^1 (1-s)^{(N-3)/2}\,ds \\
& = - \frac1{N-1} R^{N-1} |\Sph^{N-2}| \left( 1- (1-a)^2 \right)^{(N-1)/2} = - 2\, R^{N-1} \lambda_{N,1} \,.
\end{align*}
In the last equality we used $C^{(\alpha)}_0\equiv 1$ together with our formula for $\lambda_{N,1}$. This proves \eqref{eq:evtransl}.

For $F\in L^2(\Sph^{N-1})$ we denote by $F_\ell$ its projection on the subspace of spherical harmonics of degree $\ell$ and then
$$
\mathcal Q[F] = \sum_{\ell=0}^\infty \lambda_{N,\ell} \|F_\ell\|^2
\qquad\text{and}\qquad
\|F\|^2 = \sum_{\ell=0}^\infty \|F_\ell\|^2 \,.
$$
The orthogonality conditions \eqref{eq:appfortho} imply that $F_0=0$ and $F_1=0$. Thus, in view of \eqref{eq:evtransl}, the bound \eqref{eq:christhessianbound} is equivalent to the bounds
\begin{equation}
\label{eq:christhessianbound1}
\lambda_{N,\ell} \leq 2A \,\lambda_{N,1} 
\qquad\text{for all}\ \ell\geq 2\ \text{with a constant}\ A<\tfrac12 \,.
\end{equation}
In terms of the explicit expression of $\lambda_{N,\ell}$, recalling that $C^{(\alpha)}_0\equiv 1$ we find
$$
\frac{\lambda_{N,\ell}}{\lambda_{N,1}} = \frac{C_{\ell-1}^{(N/2)}(1-a)}{C_{\ell-1}^{(N/2)}(1)} \,.
$$
Thus, from \cite[(22.14.2)]{AbSt} we infer that $\lambda_{N,\ell}\leq\lambda_{N,1}$ for all $\ell\geq 1$, but this is not quite enough to conclude \eqref{eq:christhessianbound1}. However, arguing as in \cite[Subsec.~7.33]{Sz} we see that the successive local maxima of $|C^{(N/2)}_{\ell-1}|$ in $[0,1]$  are strictly increasing. Moreover, since $|C_{\ell-1}^{(N/2)}|$ is even, we infer that $|C_{\ell-1}^{(N/2)}(t)|<C_{\ell-1}^{(N/2)}(1)$ for all $t\in(-1,1)$. This proves that, in fact, $\lambda_{N,\ell}<\lambda_{N,1}$ for all $\ell\geq 2$.

The uniform bound \eqref{eq:christhessianbound1} now follows from the fact that $\lambda_{N,\ell}\to 0$ as $\ell\to\infty$, which is a consequence of the compactness in $L^2(\Sph^{N-1})$ of the quadratic form $\mathcal Q$. Alternatively, one can use asymptotics of orthogonal polynomials \cite[Chapter 8]{Sz} to conclude that $\lambda_{N,\ell}/\lambda_{N,1} \to 0$ as $\ell\to\infty$. This concludes our alternative proof of \eqref{eq:christhessianbound}.

The last argument might sound like it involves compactness. This is not so, since it is easy to obtain an upper bound on $\lambda_{N,\ell}$ that tends to zero and therefore the condition $\lambda_{N,\ell}<\lambda_{N,1}$ needs only to be checked for an explicit finite number of $\ell$'s. Let us denote by $\mu_{n}$ the eigenvalues of the quadratic form $\mathcal Q$ in non-increasing order, \emph{repeated according to multiplicities}. (Thus, the $\mu_n$ are the same as the $\lambda_{N,\ell}$, up to repetitions.) Then, by computing the Hilbert--Schmidt norm of the operator associated with the quadratic form $\mathcal Q$ we find that
$$
\sum_n \mu_n^2 = \tfrac14 \iint_{\Sph^{N-1}\times\Sph^{N-1}} \1_{\{R|\omega-\omega'|<\tilde R\}} \,d\omega\,d\omega' \,.
$$
The integral on the right side equals
$$
\tfrac14 |\Sph^{N-1}| |\Sph^{N-2}| \int_{1-a}^1 (1-t^2)^{(N-3)/2} \,dt = \tfrac12 |\Sph^{N-1}| \, \lambda_{N,0} \,.
$$
Thus, for all $n$,
$$
\mu_n \leq n^{-1/2} \left( \tfrac12 |\Sph^{N-1}| \, \lambda_{N,0} \right)^{1/2} \,,
$$
and consequently for $n\geq 2 |\Sph^{N-1}| \, \lambda_{N,0} \lambda_{N,1}^{-2}=:n_0$, the inequality $\mu_n\leq (1/2)\lambda_{N,1}$ is satisfied. Therefore, \eqref{eq:christhessianbound1} holds with
$$
2A = \max\left\{ \frac 12, \max_{n<n_0\,,\, \mu_n\neq\lambda_{N,0}} \frac{\mu_n}{\lambda_{N,1}} \right\}.
$$
This formula also justifies the claim that the constant $A$ can be chosen to depend only on $N$ and $\delta$ (and not on $R$ and $\tilde R$).

\begin{remark}\label{threesets}
In the introduction we stated that the analysis of the Hessian in the present case is simpler than in Christ's case of three sets. Let us elaborate on this point. In our case we essentially needed to prove that certain eigenvalues $\lambda_{N,\ell}$ satisfy an inequality $\lambda_{N,\ell}<\lambda_{N,1}$ for $\ell\geq 2$. In the three set case the eigenvalues $\lambda_{N,\ell}$ (with different values of $a$) become entries in a $3\times 3$ matrix, whose eigenvalues are then of concern. For instance, the (1,2) entry of this matrix comes from the eigenvalues of the quadratic form with kernel $\1_{\{|r_1\omega+r_2\omega'|<r_3\}}$ for certain fixed $r_1,r_2,r_3>0$. Since
$$
\{|r_1\omega+r_2\omega'|<r_3\} = \{ \omega\cdot\omega' < b_{12}\}
$$
with $b_{12} := (r_3^2 - r_1^2 - r_2^2)/(2r_1r_2)$, the above computations yield a formula for the eigenvalues of this quadratic form. However, what is needed is an understanding of the eigenvalues of the resulting $3\times 3$ matrix. Another complication comes from additional zero modes due to an additional symmetry. Similar difficulties were overcome in Christ's proof of a quantitative stability theorem for Young's convolution inequality \cite{Ch2}. The Hessian there is treated explicitly in terms of Hermite functions, whose role is similar to that of spherical harmonics here.
\end{remark}

%%%%%%%%%%%%%%%%%%%%%%%%%%%%%%%%%%%%%%%%%%%%%%%%%%%%%%%%%%%%%%%%%%%%%%%%%%%%%%%%
%%%%%%%%%%%

\bibliographystyle{amsalpha}

\end{document}